\renewcommand{\P}{{\rm P}}
\newcommand{\E}{{\rm E}}
\newcommand{\RR}{\mathbb{R}}
\newcommand{\CC}{\mathbb{C}}
\newcommand{\tr}{^{\textnormal {\tiny  T}}}
\newcommand{\diag}{\mathrm{diag}}
\newcommand{\re}{\mathrm{Re}}
\newcommand{\ud}{\mathrm{d}}  
\newcommand{\indic}{\mathbb 1}
\renewcommand{\u}{{\mbox{\tiny $+$}}}
\renewcommand{\d}{{\mbox{\tiny $-$}}}
\newcommand{\FF}{\mathcal{F}}
\newcommand{\Ss}{\mathcal{S}}
\newcommand{\Su}{\mathcal{S}_\u}
\newcommand{\Sd}{\mathcal{S}_\d}
\newcommand{\Sz}{\mathcal{S}_0}
\newcommand{\PP}{\mathcal{P}}
\newcommand{\UU}{\mathcal{U}}
\newcommand{\vect}[1]{\boldsymbol #1}
\newcommand{\valpha}{\vect \alpha}
\newcommand{\vrho}{\vect \rho}
\newcommand{\vbeta}{\vect \beta}
\newcommand{\veta}{\vect \eta}
\newcommand{\vone}{\vect 1}
\newcommand{\vzero}{\vect 0}
\newcommand{\vcc}{\vect c}
\newcommand{\vG}{\vect G}
\newcommand{\vh}{\vect h}
\newcommand{\vm}{\vect m}
\newcommand{\vp}{\vect p}
\newcommand{\vu}{\vect u}
\newcommand{\vv}{\vect v}
\newcommand{\vw}{\vect w}
\newcommand{\bphi}{\bar\varphi}
\newcommand{\wM}{\widetilde M}
\newcommand{\whH}{\widehat{H}}
\newcommand{\whU}{\widehat{U}}
\newcommand{\whK}{\widehat{K}}
\newcommand{\whpsi}{\widehat{\Psi}}
\newcommand{\whphi}{\widehat{\Phi}}
\newcommand{\vligne}[1]{\begin{bmatrix} #1 \end{bmatrix}}
\newtheorem{defn}{Definition}[section]
\newtheorem{lem}[defn]{Lemma}
\newtheorem{thm}[defn]{Theorem}
\newtheorem{cor}[defn]{Corollary}
\newtheorem{rem}[defn]{Remark}
\newcommand{\qed}{\hfill $\square$}
\newenvironment{proof}{
      \noindent {\bf Proof }}{\qed
      \vspace{0.25\baselineskip}
}
\newcommand{\debproof}{\begin{proof}}
\newcommand{\finproof}{\end{proof}}
\definecolor{darkmagenta}{rgb}{0.5,0,0.5}
\definecolor{darkgreen}{rgb}{0,0.6,0}
\definecolor{darkblue}{rgb}{0,0,0.6}
\definecolor{darkred}{rgb}{0.8,0,0}
\definecolor{mellow}{rgb}{.847, 0.72, 0.525}
\begin{document}

\title{Analysis of fluid flow models}

\author{Guy Latouche\thanks{Universit\'e libre de Bruxelles,
D\'epartement d'informatique, CP 212, Boulevard du Triomphe, 1050
Bruxelles, Belgium, \texttt{latouche@ulb.ac.be}.}
\and
Giang T. Nguyen\footnote{The University of Adelaide, School of Mathematical Sciences, SA 5005, Australia, \texttt{giang.nguyen@adelaide.edu.au}}}
\date{\today}
\maketitle

\begin{abstract}

  Markov-modulated fluids have a long history.  They form a simple
  class of Markov additive processes, and were initially developed in
  the 1950s as models for dams and reservoirs, before gaining much
  popularity in the 1980s as models for buffers in telecommunication
  systems, when they became known as {\em fluid queues}.  More recent
  applications are in risk theory and in environmental studies.

  In telecommunication systems modelling, the attention focuses on
  determining the stationary distribution of the buffer content.
  Early ODE resolution techniques have progressively given way to
  approaches grounded in the analysis of the physical evolution of the
  system, and one only needs now to solve a Riccati equation in order
  to obtain several quantities of interest.  To the early algorithms
  proposed in the Applied Probability literature, numerical analysts
  have added new algorithms, improved in terms of convergence speed,
  numerical accuracy, and domain of applicability.

  We give here a high-level presentation of the matrix-analytic
  approach to the analysis of fluid queues, briefly address
  computational issues, and conclude by indicating how this has been
  extended to more general processes.

\end{abstract}

\section{Introduction}
   \label{s:intro}

Markov-modulated processes are popular because they can be used to describe
the evolution of simple systems under conditions that vary in time.  The
fluid flow processes presented here have long found applications as
models for dams and reservoirs (Loynes~\cite{loyne62}), and for buffers in
telecommunication systems, Anick {\it et
  al.}~\cite{ams82} being a famous early paper.   It is in the latter
context that the term {\em fluid queue} was
coined.   Later, the domain of
applicability of fluid flows has been extended to risk theory (Avram and
Us\'abel~\cite{au03b}, Badescu {\it et al.}~\cite{bdlrs03}),
operations management (Bean {\it et al.}~\cite{bos10}) and others.

Fluid flows are two-dimensional processes
$\{X(t), \varphi(t): t \in \RR^+\}$, where $\{\varphi(t)\}$ is a
continuous-time, irreducible Markov chain on some finite state space
$\Ss = \{1, \ldots, m\}$, and $X(t)$ takes values in $\RR$ under the
control of $\varphi$.  In the simplest form,
\begin{equation}
   \label{e:x}
X(t) = X(0) + \int_0^t c_{\varphi(s)} \, \ud s,
\end{equation}
with $\vcc = [c_i : i \in \Ss]$ being a vector of arbitrary real constants.  The
component $X$ is called the level, $\varphi$ is called the phase, and
the level is a piecewise-linear function, with constant slope over
intervals when the phase is constant.  An example with four phases is
shown on Figure~\ref{f:mmff}.  

It will be useful in the sequel to
partition $\Ss$ into three subsets according to the sign of the rates $c_i$:
\begin{equation}
   \label{e:s}
\Su = \{i \in \Ss : c_i >0\},
\quad
\Sd = \{i \in \Ss : c_i <0\},
\quad
\Sz = \{i \in \Ss : c_i =0\}.
\end{equation}

\begin{figure}[t]
\centering{
\includegraphics{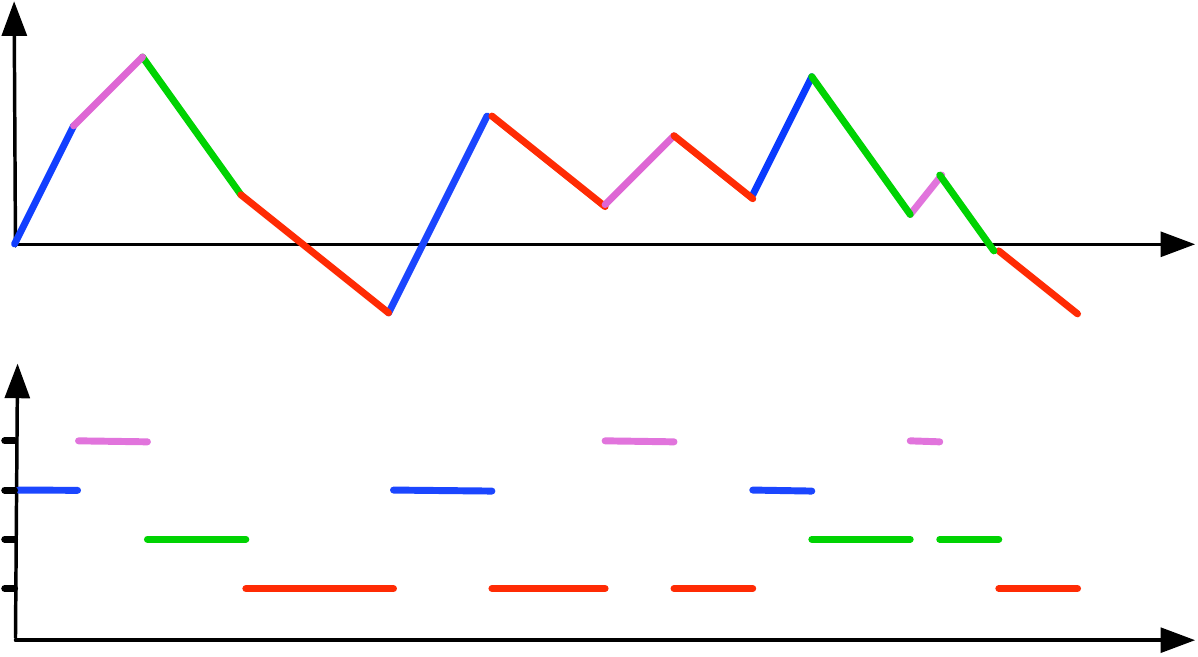} 
\put(-10,-10){\makebox(0,0)[c]{$t$}}
\put(-10,104){\makebox(0,0)[c]{$t$}}
\put(-357,185){\makebox(0,0)[c]{$X(t)$}}
\put(-355,80){\makebox(0,0)[c]{$\varphi(t)$}}
\put(-352,65){\makebox(0,0)[c]{$4$}}
\put(-352,50){\makebox(0,0)[c]{$3$}}
\put(-352,35){\makebox(0,0)[c]{$2$}}
\put(-352,20){\makebox(0,0)[c]{$1$}}
}
\caption{Sample trajectory of a fluid flow (top) and of its controlling Markov
chain (bottom).  The fluid rates vector is $\vcc = [-0.8, -1.4, 2, 1]$
so that different states correspond to different slopes.}
\hrulefill
\label{f:mmff}
\end{figure}

In many applications, $X$ is a model for a physical quantity (water in
a reservoir, packets in a buffer, energy level of a battery, etc.) and
may not take negative values.  In such cases, one might assume that
whenever the level becomes equal to 0 and the rate is negative, 
the level remains equal to 0 until there is a change to a phase with
positive rate.  This is illustrated in Figure~\ref{f:mmffb}: at time
$\theta_1$ the fluid hits level 0, the phase is equal to 1 with
$c_1 = -0.8$.  The fluid remains equal to 0 until time $\delta_1$ where
the phase process switches to state 3, with $c_3 =2$.

\begin{figure}[t]
\centering{
\includegraphics{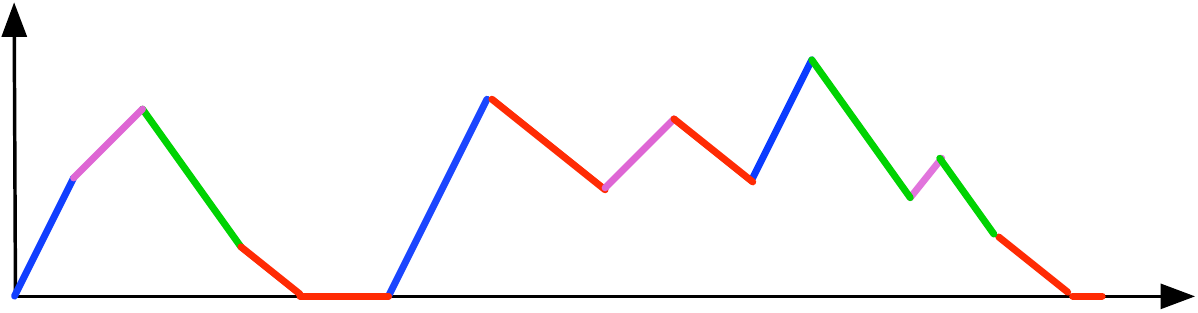} 
\put(-10,-10){\makebox(0,0)[c]{$t$}}
\put(-340,-10){\makebox(0,0)[c]{$\delta_0$}}
\put(-260,-10){\makebox(0,0)[c]{$\theta_1$}}
\put(-230,-10){\makebox(0,0)[c]{$\delta_1$}}
\put(-37,-10){\makebox(0,0)[c]{$\theta_2$}}
\put(-357,84){\makebox(0,0)[c]{$Y(t)$}}
}
\caption{\label{f:mmffb}
Sample trajectory of a regulated fluid flow}
\hrulefill
\end{figure}

Such a mechanism  is justified by the fact that   the rates
$c_i$ often result from the superposition of different effects, some which
remove fluid from the buffer and some which add fluid.  If at some
time the buffer is empty and the output rate remains greater than the
input rate, fluid does not accumulate and the buffer remains empty.  

Formally, we define the regulator  
$
R(t) = -\min(0, \min_{0 \leq s \leq t}X(s))
$
 and the fluid queue (the regulated fluid flow) is $\{Y(t), \varphi(t)\}$, with 
\begin{equation}
   \label{e:regulated}
Y(t) = X(t)+R(t).
\end{equation}

In the first part of this paper, we focus on characterising the
stationary distribution of the regulated fluid queue, when it exists.
Although the details of our presentation are very
much inspired by Ramaswami~\cite{ram99} and da Silva Soares and
Latouche~\cite{dssl01}, we follow a slightly different path and give
explicit reference to semi-regenerative processes
(\c{C}inlar~\cite[Chapter 10]{cinla75}); this allows us  to interpret in a
unified manner the ad-hoc analysis of several published variants of
our basic model.

Let us assume that $Y(0)=0$ and define the
sequences $\{\delta_n: n \geq 0\}$ and  $\{\theta_n: n \geq 1\}$ as
follows:
\begin{align*}
\delta_0 & = \inf\{t > 0: Y(t) >0\}, \\
\theta_n & = \inf\{t > \delta_{n-1}: Y(t) =0\}, \qquad
\delta_n  = \inf\{t > \theta_n: Y(t) >0\}
\end{align*}
(see an illustration in Figure~\ref{f:mmffb}).  It is easily seen that
$\{\theta_n: n \geq 1\}$ is a set of regenerative epochs for the
process $\{Y(t), \varphi(t)\}$: 

\begin{itemize}
\item 
$\{Y(\theta_n), \varphi(\theta_n)\}$ is a Markov chain on the
  state space $\{0\} \times \Sd$ as the fluid is reaching down to level
  zero at these epochs, in a phase of $\Sd$,
\item
the process over the interval $[\theta_n, \infty)$ is independent of
the process over the interval $[0, \theta_n)$, given
$\varphi(\theta_n)$,  for all $n$,  and
\item
the distribution of the process over the interval $[\theta_n,
\infty)$, given $\varphi(\theta_n)= s$,  is the same as the
distribution of the process over the interval $[\theta_1,
\infty)$, given $\varphi(\theta_1)= s$ for all $n$ and $s \in \Sd$.
\end{itemize}
In consequence, we may immediately write that the stationary
distribution $\vG(x) = \vligne{G_i(x): i \in \Ss}$, defined as%
\footnote{We use boldface letters to represent vectors, and capital
  letters for matrices.} 
\[
G_j(x) = \lim_{t \rightarrow \infty } \P [Y(t) \leq x, \varphi(t)= j],
\qquad j \in \Ss,
\]
is given by
\begin{equation}
   \label{e:G}
\vG\tr(x) = (\vrho\tr \vm)^{-1} \vrho\tr M(x)
\end{equation}
where 
\begin{itemize}
\item $\vrho$ is the stationary probability vector of
  $\{(Y(\theta_n),\varphi(\theta_n))\}$,
\item
$M(x)$ is an $\Sd$ by $\Ss$ matrix, with components $M_{ij}(x)$ equal
to the conditional expected sojourn time of $(Y(t), \varphi(t))$ in $[0,x] \times
\{j\}$ during a regeneration interval $[\theta_n, \theta_{n+1})$,
given that $\varphi(\theta_n)=i$, and
\item
 $\vm = M(\infty) \vone$, where $\vone$ represents a vector of
 ones; the components of $\vm$ are the conditional expected lengths of
 intervals between regeneration points, given the phase at the
 beginning of the interval.
\end{itemize}
We give in Section~\ref{s:regenerative} a few basic characteristics of
the process at level 0.  The vector $\vrho$ is determined in
Section~\ref{s:passage}, where we define and analyse two important
first passage probability matrices, and $M(x)$ is determined in
Section~\ref{s:crossings} through the number of crossings of a given
level during regenerative intervals.  The results in these two
sections are brought together in Section~\ref{s:stationary} to give
the stationary distribution of the fluid queue.

The key matrices defined in Sections~\ref{s:passage} 
and~\ref{s:crossings} have very distinct physical significance but they are
algebraically closely related.  We show this in
Section~\ref{s:factorisation}, using results originally proved in
Rogers~\cite{roger94}.  

In Section~\ref{s:times}, we characterise the {\em distribution} of
first passage times to a given level, and we analyse in
Section~\ref{s:escape} the first exit from an {\em interval}.
The two sections 
come as complements to
Section~\ref{s:passage}.

In many applications, in particular in telecommunication 
modeling, the buffer level is not allowed to grow without bounds.
Often, the evolution of the process changes as the upper or lower
boundary is reached.  These, and other modifications of the basic fluid flow
model, are briefly discussed in Section~\ref{s:extensions} where we
show how the regenerative approach may be readily adapted to more
complex assumptions.

One of the nice features of the matrix-analytic approach
is that computational
algorithms are easily constructed, following the development of the
theoretical results.  As an illustration, we give in Section~\ref{s:algorithms} two of the
simplest, and yet very efficient, algorithms for the numerical
computation of the key matrix $\Psi$ identified in
Section~\ref{s:passage}.

Most of the results presented here have appeared earlier.  For that
reason, we give explanatory justifications mostly, and we refer to
published sources for  formal justifications.  In a few cases,
however, we give formal proofs:  
in Section~\ref{s:escape} we give a new treatment of
escape probabilities for
null recurrent processes (Lemma~\ref{t:null} and 
Theorem~\ref{t:fullsolution}), and we offer with
Theorem~\ref{t:functional} a novel justification for a nice
computational procedure for $\Psi$.

\section{Preliminaries}
\label{s:regenerative}

We partition the generator $Q$ of the Markov process
$\{\varphi(t): t \in \RR^\u\}$ in a manner conformant to the
partition (\ref{e:s}) of $\Ss$ and write, possibly after a
permutation of rows and columns,%
\footnote{We write $\vv_\u$ for the subvector $[v_i: i \in \Su]$ of
  any vector $\vv$, and $M_{\u\u}$ for the submatrix of any matrix $M$
  at the intersection of the rows and columns in $\Su$.   Other
  sub-vectors and sub-blocks are similarly defined.}
\begin{equation}
   \label{e:q}
Q = \vligne{Q_{\u\u} & Q_{\u\d} & Q_{\u0} \\ Q_{\d\u} & Q_{\d\d}&
  Q_{\d0} \\Q_{0\u}&  Q_{0\d} & Q_{00} }.
\end{equation}
We assume that $Q$ is irreducible.  We also define the diagonal matrix
$C$ of fluid rates, 
$C = \diag(\vcc)$, and we partition it
as
\[
C = \vligne{C_\u \\ & C_\d \\ & & 0}.
\]
The fluid process $\{X(t)\}$ moves up and down in a random manner but
its general direction is determined by the stationary drift $\mu =
\valpha\tr \vcc$, where $\valpha$ is the stationary probability
vector of $Q$: $\valpha\tr Q= \vzero$, $\valpha\tr \vone =1$.  If $\mu
>0$, the process eventually drift to $+ \infty$, that is, $\lim_{t
  \rightarrow \infty} X(t) = \infty$,  if $\mu < 0$, then $\lim_{t
  \rightarrow \infty} X(t) = -\infty$; in both cases the process is
transient.  If $\mu=0$, the process is null-recurrent and 
$\lim\sup X(t) = \infty$ while $\lim\inf X(t) = -\infty$
(Asmussen~\cite[Page 314, Proposition 2.10]{asmus03}).

Things are slightly different for the regulated process
$\{Y(t)\}$.  
If $\mu <0$, the process repeatedly
alternates between intervals of time where $Y(t)>0$ and intervals
where $Y(t)=0$.   As we find in Section~\ref{s:factorisation}, the
length of each cycle has finite expectation, the regulated process
is positive recurrent, and we may determine its stationary
distribution.  If $\mu >0$, then $Y(t)$ might not return to level 0
and so the process is transient.  Not surprisingly, it is
null-recurrent if $\mu = 0$.

We define $\bphi_n = \varphi(\theta_n)$, $n \geq 1$.
The process $\{\bphi_n\}$ of the phases visited at epochs of
regeneration is a Markov chain on the state space $\Sd$ by definition,
and its transition matrix is $H$, with
\[
H_{ij} = \P[\theta_{n+1} - \theta_n < \infty, 
\bphi_{n+1} =j| \bphi_n=i ], \qquad \mbox{$i$, $j$ in $\Sd$},.
\]
To determine $H$, we split the interval $(\theta_n,
\theta_{n+1}]$ in two and we condition  on the phase
occupied at time $\delta_n$.  Thus, $H$ is the product
\begin{equation}
   \label{e:H}
H =\Phi \, \Psi,
\end{equation}
where 
\begin{align}
  \nonumber
\Phi_{ik} & = \P[\delta_n - \theta_n < \infty, \varphi(\delta_n)=k | \bphi_n=i ], && i \in \Sd,
            k \in \Su, \\
   \label{e:defpsi}
\Psi_{kj} & = \P[\theta_{n+1}- \delta_n < \infty, \bphi_{n+1} = j | \varphi(\delta_n)=k], && k \in
            \Su, j \in \Sd.
\end{align}
The matrix $\Phi$ is easily determined:  $\varphi(t)$ remains in $\Sd \cup
\Sz$ during the interval $(\theta_n, \delta_n)$, and thus
\begin{equation}
   \label{e:Phi}
\Phi = \vligne{I & 0} 
  \left(-\vligne{Q_{\d\d}& Q_{\d0} \\  Q_{0\d} & Q_{00} }^{-1}\right)
  \vligne{Q_{\d\u} \\  Q_{0\u}}
\end{equation}
(Latouche and Ramaswami~\cite[Section 5.5]{lr99}).
To determine the matrix $\Psi$  requires more effort, and we devote Section~\ref{s:passage} to the
determination of its characteristic equation.    Once $\Psi$ is known,
the vector $\vrho$ in (\ref{e:G}) is determined by the system $\vrho\tr H =
\vrho\tr$, $\vrho\tr \vone = 1$.

In a similar manner, we decompose $M(x)$ as $M(x)= M(0)+M(0,x]$, where
$M(0)$ is the expected time spent at level 0 between the epochs
$\theta_n$ and $\delta_n$,  and $M(0,x]$ is the expected time spent in the
semi-open interval $(0,x]$.   We decompose $M(0)$ as
\[
M(0) = \vligne{M(0)_{\d\u} & M(0)_{\d\d} & M(0)_{\d 0} },
\]
and immediately note that $M(0)_{\d\u} =0$ as the fluid queue does not spend any time
at level 0 in a phase of $\Su$.  Furthermore,
\begin{equation}
   \label{e:mo}
  \vligne{ M(0)_{\d\d} & M(0)_{\d 0} } = \vligne{I & 0} 
  \left(-\vligne{Q_{\d\d}& Q_{\d0} \\  Q_{0\d} & Q_{00} }^{-1}\right),
\end{equation}
see \cite[Section 5.5]{lr99}.
The second term in $M(x)$ is equal to
\begin{equation}
   \label{e:phim}
M(0,x] = \Phi \, \wM(x),
\end{equation}
where $\widetilde M(x)$ is the matrix of expected times spent in
$(0,x]$ during the interval of time $(\delta_n, \theta_{n+1}]$.  It is
determined in Section~\ref{s:crossings}.

\section{First passage probabilities}
\label{s:passage}

We deal in this section with the fluid flow model $\{X(t),\varphi(t)\}$
without boundary.  Its transition structure is independent of the
level and so we shall not always pay close attention to the exact
value of $X$, but be more interested in differences of level.  For
instance, the matrix $\Psi$ defined in (\ref{e:defpsi}) might have
been defined as 
\[
\Psi_{ij} = \P[\Theta < \infty, \varphi(\Theta)=j | \varphi(0)=i],
\qquad i \in \Su, j \in \Sd.
\]
independently of $X(0)$, where $\Theta
= \inf\{t > 0: X(t) = X(0)\}$ is the first return time to the initial
level, starting in a phase of $\Su$.

Furthermore, let 
\begin{equation}
   \label{e:tau}
\tau^\d_x = \inf\{t : X(t) < X(0)-x\}
\end{equation}
be the first passage time to level $X(0)-x$, for $x >0$, and denote by $\gamma^\d(x) = \varphi(\tau^\d_x)$
the value of the phase when $X(0)-x$ is reached for the first
time.%
\footnote{As the trajectory of $X(t)$ is continuous, we might have
  defined $\tau^\d_x = \inf\{t : X(t) = X(0)-x\}$, but the strict
  inequality in (\ref{e:tau}) will be useful in Section~\ref{s:escape}.
}  
For $\varphi(0)$ in $\Sd$, the process $\{\gamma^\d(x):x \geq 0\}$ is a
continuous-parameter Markov process on the state space $\Sd$, and
there exists a matrix $U$ such that 
\[
(e^{Ux})_{ij} = \P[\tau_x^\d < \infty, \gamma^\d(x)=
j|\gamma^\d(0)=i],  \qquad i,j \in \Sd. 
\]
As $Q$ is irreducible, $\Psi$ is
strictly positive, meaning that $\Psi_{ij} >0$ for all $i$ in $\Su$,
$j$ in $\Sd$.  To see this, imagine a trajectory of
positive probability such that, starting from $(0,i)$, the process
returns at 0 for the first time in phase $j$, in finite time.  A
formal proof is in  Govorun {\it et al.}~\cite[Lemma~4.3]{glr11b}, or
Guo~\cite[Theorem~5]{guo02b}.   In consequence, the
off-diagonal elements of the generator $U$ are all strictly positive.
We discuss at greater length the algebraic properties of $U$ and
$\Psi$ in Section~\ref{s:factorisation}, but mention here the
most important ones, in relation to the stationary drift $\mu$:
\begin{itemize}
\item if $\mu \leq 0$, then $\Psi \vone = \vone$ and $U \vone =
  \vzero$, that is, $e^{Ux}$ is stochastic,
\item if $\mu >0$, then $\Psi \vone < \vone$ and $U \vone <
  \vzero$, that is, $e^{Ux}$ is  substochastic.%
\footnote{With $\vu$ and $\vv$ two vectors on the same set of indices,
  we write $\vu < \vv$ if $u_i < v_i$ for all index $i$.}
\end{itemize}

Finally, we define the matrix 
\begin{equation}
   \label{e:t}
T = \vligne{Q_{\u\u} & Q_{\u\d} \\ Q_{\d\u} & Q_{\d\d}}
 + \vligne{Q_{\u0} \\ Q_{\d0}}   (- Q_{00} )^{-1}   \vligne{Q_{0\u}&  Q_{0\d}}
\end{equation}
indexed by the states in $\Su \cup\Sd$ and we partition it as
\[
T= \vligne{T_{\u\u} & T_{\u\d} \\ T_{\d\u} & T_{\d\d}}.
\]
This is the generator of the censored process $\{\varphi(t)\}$
observed  only during the intervals of time spent in $\Su \cup \Sd$.

\begin{thm}
   \label{t:psiNu}
The matrix $U$ is given by
\begin{equation}
   \label{e:u}
U = |C_\d|^{-1} T_{\d\d}  + |C_\d|^{-1} T_{\d\u} \Psi,
\end{equation}
and $\Psi$, as defined in (\ref{e:defpsi}), is the minimal nonnegative
solution of the quadratic Riccati equation
\begin{equation}
   \label{e:riccati}
C_\u^{-1} T_{\u\d}  + C_\u^{-1} T_{\u\u} \Psi + \Psi |C_\d|^{-1}
T_{\d\d}  + \Psi  |C_\d|^{-1} T_{\d\u}  \Psi =0, 
\end{equation}
where $|C_\d|$ is the matrix of absolute values of the elements of $C_\d$.
\qed
\end{thm}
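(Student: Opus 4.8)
The plan is to obtain both displayed identities by a first-step analysis on the censored process $\{\varphi(t)\}$ observed only on $\Su\cup\Sd$, whose generator is $T$. Since $\varphi$ spends zero Lebesgue-time in $\Sz$ and the level is unchanged there (as $c_i=0$ on $\Sz$), censoring out $\Sz$ changes neither $\Psi$, $U$, nor the passage times $\tau^\d_x$; so throughout I work with the two-block chain on $\Su\cup\Sd$ with generator $T$ as in (\ref{e:t}). The key observation is that, on $\Su\cup\Sd$, the level $X$ is strictly monotone in $t$ between phase changes, so I can reparametrise time by level and read off the generator of the level-indexed phase process.

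First I would derive (\ref{e:u}). Start the process from a phase $i\in\Sd$ and follow $X$ as it decreases. In a tiny level-decrement $\ud x$, starting in phase $i\in\Sd$, the elapsed clock time is $\ud x/|c_i|$, during which a transition within $\Sd$ to $j$ occurs with rate $(T_{\d\d})_{ij}$, giving the contribution $|C_\d|^{-1}T_{\d\d}\,\ud x$ to the generator of $\{\gamma^\d(x)\}$. Alternatively, during that clock-time the phase may jump up into some $k\in\Su$ (rate $(T_{\d\u})_{ik}$); from $(X(0)-x,k)$ the process must, before contributing to $\gamma^\d$ at lower levels, first return to level $X(0)-x$, and by the level-homogeneity of the unbounded fluid flow it does so (when it does) in a phase $\ell\in\Sd$ with probability $\Psi_{k\ell}$ — this is exactly the reinterpretation of $\Psi$ given just before (\ref{e:tau}). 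Collecting the two possibilities and passing to the limit $\ud x\to 0$ yields $U=|C_\d|^{-1}T_{\d\d}+|C_\d|^{-1}T_{\d\u}\Psi$, which is (\ref{e:u}). One should note that this argument also shows $\{\gamma^\d(x)\}$ is genuinely Markov with a level-independent generator, justifying the existence of $U$ asserted in the text.

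Next I would derive (\ref{e:riccati}) by a symmetric first-step analysis for $\Psi$ itself, now following $X$ as it increases from a phase $i\in\Su$. Condition on what happens in the first level-increment $\ud x$ above $X(0)$: either (a) the phase stays in $\Su$, moving from $i$ to $k$ with generator contribution $C_\u^{-1}T_{\u\u}\,\ud x$, after which the process, started from $(X(0)+\ud x,k)$, must return to the original level $X(0)$ in a phase of $\Sd$ — probability $\Psi_{kj}$; or (b) the phase jumps directly down into $\Sd$ (contribution $C_\u^{-1}T_{\u\d}\,\ud x$), landing at level $X(0)+\ud x$ in some phase $\ell\in\Sd$, from which the process must descend through $\ud x$ to reach $X(0)$, governed by $e^{U\,\ud x}$. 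Writing $\Psi = C_\u^{-1}T_{\u\d}\,\ud x\cdot e^{U\,\ud x}\cdot(\cdots) + (I+C_\u^{-1}T_{\u\u}\,\ud x)\Psi\,e^{U\,\ud x} + o(\ud x)$, expanding $e^{U\,\ud x}=I+U\,\ud x+o(\ud x)$, cancelling the $\Psi$ on both sides, dividing by $\ud x$ and letting $\ud x\to 0$ gives $0 = C_\u^{-1}T_{\u\d} + C_\u^{-1}T_{\u\u}\Psi + \Psi U$. Substituting the expression (\ref{e:u}) for $U$ into $\Psi U$ produces exactly (\ref{e:riccati}).

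Finally, minimality. The nonnegative solutions of (\ref{e:riccati}) correspond, via $U = |C_\d|^{-1}T_{\d\d}+|C_\d|^{-1}T_{\d\u}\Psi$, to sub-probabilistic matrices $e^{Ux}$ of return-to-level-in-$\Sd$ type, and the probabilistic interpretation of $\Psi$ as a bona fide first-passage probability forces it to be dominated by any such solution: a standard argument realises $\Psi$ as the increasing limit of the truncated quantities $\Psi^{(n)}$ obtained by forbidding more than $n$ up-down excursions, each $\Psi^{(n)}$ is dominated by an arbitrary nonnegative solution by induction on $n$ using the monotonicity of the right-hand side of the fixed-point form of (\ref{e:riccati}) in $\Psi$, and the limit $\Psi = \lim_n \Psi^{(n)}$ is then also dominated. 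I expect the main obstacle to be the rigorous justification of the ``first-step in level $\ud x$'' heuristics — in particular, verifying that the level-reparametrised process is Markov with a level-homogeneous transition semigroup and that the $o(\ud x)$ error terms (multiple phase changes within $\ud x$) are genuinely negligible; these points are where one invokes the strong Markov property of $(X,\varphi)$ and the level-homogeneity of the unbounded model, and where references such as \cite{ram99,dssl01,lr99} supply the formal details.
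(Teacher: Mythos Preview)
Your argument is correct and closely parallels the paper's own justification. The derivation of $U$ via an infinitesimal level-decrement is exactly the paper's argument (its equation~(\ref{e:upsi})), and both routes arrive at the Sylvester equation $C_\u^{-1}T_{\u\d}+C_\u^{-1}T_{\u\u}\Psi+\Psi U=0$ (the paper's~(\ref{e:psib})) and then substitute~(\ref{e:u}) to obtain the Riccati equation.

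The one genuine difference is in how that Sylvester equation is reached. The paper conditions on the level~$y$ at which the phase first leaves~$\Su$, producing the integral representation
\[
\Psi=\int_0^\infty e^{C_\u^{-1}T_{\u\u}y}\,C_\u^{-1}T_{\u\d}\,e^{Uy}\,\ud y,
\]
and then obtains~(\ref{e:psib}) by integration by parts. You instead condition on the first infinitesimal level-increment~$\ud x$ and read off~(\ref{e:psib}) directly. Your route is shorter; the paper's buys the explicit integral formula, which it reuses in Section~\ref{s:algorithms} as the basis for the functional-iteration algorithm. Your minimality sketch via monotone truncated iterates is standard and is essentially what the paper formalises later in Theorem~\ref{t:functional}; the paper itself defers the minimality claim here to the cited references. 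One small cleanup: in your displayed first-step identity the ``$\cdot(\cdots)$'' after $e^{U\,\ud x}$ in case~(b) should simply be dropped, and in the prose for case~(a) the return probability from $(X(0)+\ud x,k)$ to $X(0)$ is $(\Psi e^{U\,\ud x})_{kj}$, not~$\Psi_{kj}$ --- your equation has this right even though the words do not.
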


We give here a high-level justification, a formal proof is in Ahn and
Ramaswami~\cite{ar03} and da Silva Soares and Latouche~\cite{dssl03}.
To determine $\Psi$, it is simpler to use the censored process on
$\Su \cup \Sd$ since there is no change in the level while
$\varphi$ is in $\Sz$; that is why we use in (\ref{e:riccati}) the
generator $T$ of (\ref{e:t}) instead of the generator $Q$ of (\ref{e:q}).

Furthermore, instead of tracking the evolution of the phase process in
{\em time} as one might be tempted to do, we
track its  evolution over changes of the {\em level}.  The parameters
$|c_i|$, for $i \in \Su \cup \Sd$, are conversion rates of time to
fluid and their reciprocal $|c_i|^{-1}$ are conversion rates of fluid
to time, so that the matrix $|C|^{-1} T$ indicates how the phase evolves
as the fluid level is increasing or decreasing. 

With this in mind, we write 
\begin{equation}
   \label{e:psia}
\Psi = \int_0^\infty   e^{C_\u^{-1} T_{\u\u} y} \, C_\u^{-1} T_{\u\d} \,
e^{Uy} \, \ud y.
\end{equation}
The justification goes as follows (see the illustration in
Figure~\ref{f:riccati}).  The process starts in a phase of $\Su$, and
we assume without loss of generality that $X(0)=0$.  The factor
$e^{C_\u^{-1} T_{\u\u} y}$ in the right-hand side of (\ref{e:psia}) is
the probability that the phase remains in $\Su$ until the fluid has
increased up to level $y$.  The factor $C_\u^{-1} T_{\u\d} \ud y$ is
the probability that between the levels $y$ and $y+ \ud y$, the phase
moves to $\Sd$ and starts to decrease.  The factor $e^{Uy}$ is the
probability that the fluid eventually goes down to level 0.
Level $y$ is reached at some unspecified moment $t(y)$, and the
process moves without constraint between $t(y)$ and $\Theta$.

\begin{figure}[t]
\centering{
\includegraphics{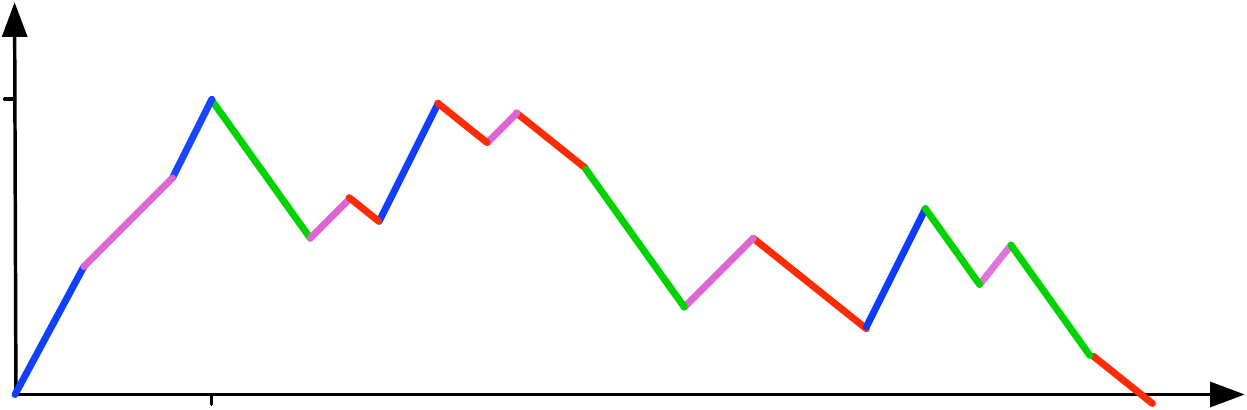} 
\put(-10,-10){\makebox(0,0)[c]{$t$}}
\put(-35,-10){\makebox(0,0)[c]{$\Theta$}}
\put(-300,-10){\makebox(0,0)[c]{$t(y)$}}
\put(-375,110){\makebox(0,0)[c]{$X(t)$}}
\put(-365,90){\makebox(0,0)[c]{$y$}}
\put(-365,5){\makebox(0,0)[c]{$0$}}
}
\caption{The fluid increases and reaches $y$ at some unspecified time
  $t(y)$.  Between $t(y)$ and $\Theta$ the fluid moves without constraint.}
\hrulefill
\label{f:riccati}
\end{figure}

We pre-multiply both sides of (\ref{e:psia}) by $C_\u^{-1} T_{\u\u}$
integrate by part, and find
\begin{equation}
   \label{e:psib}
C_\u^{-1} T_{\u\d}  +     C_\u^{-1} T_{\u\u} \Psi  + \Psi U = 0.
\end{equation}
This is a nonsingular Sylvester equation (Lancaster and
Tismenetsky~\cite{lt85}) and we may characterise $\Psi$ as the
unique solution of 
(\ref{e:psib}) if $U$ is known.

To prove  (\ref{e:u}), we write
\begin{align}
  \nonumber
U_{ij} h & =
\P[\gamma^\d(u+h)= j | \gamma^\d(u)=i] + o(h), \\
   \label{e:upsi}
  & = (|C_\d|^{-1} T_{\d\d})_{ij} h + (|C_\d|^{-1} T_{\d\u} \Psi)_{ij} h + o(h)
\end{align}
for $i$, $j$ in $\Sd$.  The first term is the probability that
$\varphi$ changes from $i$ to $j$, the second term is the probability that
$\varphi$ changes from $i$ to a phase in $\Su$ at some level $u$ and is in phase $j$
when the process later returns to level $u$.  We illustrate this in
Figure~\ref{f:uNpsi}, where we plot the trajectory of the process from
level $y$ to level 0, and draw with solid lines the part that corresponds to
$\gamma^\d(x)$.  At level $y_a$ there is a simple change from phase 2
to phase 1; at level $y_b$ there is a change from phase 2 to phase 4
in $\Su$ and upon return to level $y_b$ the process is in phase 1.
%

\begin{figure}[t]
\centering{
\includegraphics{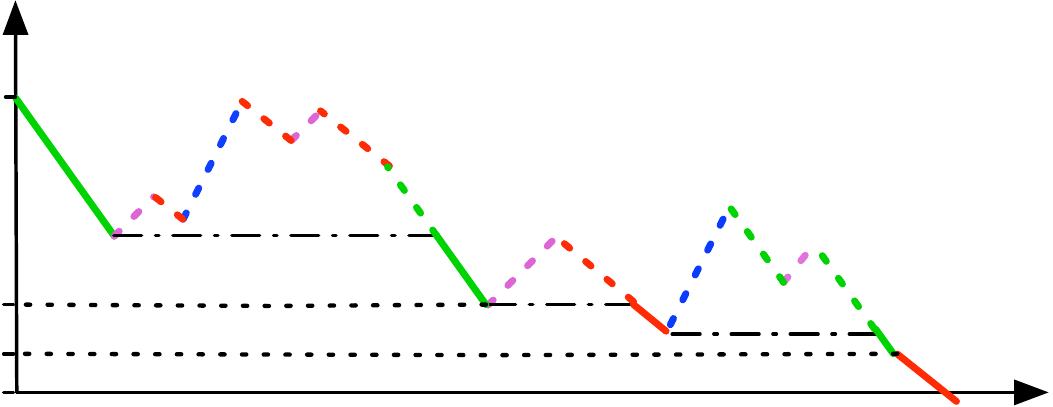} 
\put(-10,-10){\makebox(0,0)[c]{$t$}}
\put(-35,-10){\makebox(0,0)[c]{$\tau^\d_y$}}
\put(-317,110){\makebox(0,0)[c]{$X(t)$}}f
\put(-310,90){\makebox(0,0)[c]{$y$}}
\put(-310,30){\makebox(0,0)[c]{$y_b$}}
\put(-310,15){\makebox(0,0)[c]{$y_a$}}
}
\caption{The sample path of $X(t)$ from $y$ to 0 is reproduced
  from Figure~\ref{f:riccati}, the path of
  $\gamma^\d(x)$ corresponds to the solid downward segments, $y_a$ and
  $y_b$ are two of the levels where the  value of $\gamma^\d(x)$ changes.}
\hrulefill
\label{f:uNpsi}
\end{figure}

The Riccati equation (\ref{e:riccati}) is obtained by replacing in (\ref{e:psib})
$U$ by its expression in (\ref{e:u}).  
Over the years, very efficient algorithms have
been developed to solve~(\ref{e:riccati}), we describe two of these in
Section~\ref{s:algorithms}. 

Two other matrices, $\whU$ and $\whpsi$, may be defined at this stage.
Let $\tau^\u_x = \inf\{t : X(t) > X(0)+x\}$ be the first passage time
to level $X(0)+x$, for $x >0$, and denote by
$\gamma^\u(x) = \varphi(\tau^\u_x)$ the value of the phase when level
$X(0)+x$ is reached for the first time.  The matrix $\whU$ is the
generator of $\{\gamma^\u(x)\}$ and $\whpsi$ is the matrix of first
passage probability back to the initial level, given that the phase at
time 0 is in $\Sd$:
\[
\whpsi_{ij} = \P[\Theta < \infty, \varphi(\Theta)=j | \varphi(0)=i],
\qquad i \in \Sd, j \in \Su.
\]
We easily adapt the argument in Theorem \ref{t:psiNu} to prove the following.

\begin{cor}
   \label{t:psiNust}
The matrix $\whU$ is given by
\begin{equation}
   \label{e:ust}
\whU = C_\u^{-1} T_{\u\u}  + C_\u^{-1} T_{\u\d} \whpsi 
\end{equation}
and $\whpsi$ is the minimal nonnegative solution of the equation
\begin{equation}
   \label{e:riccatist}
|C_\d|^{-1} T_{\d\u} + |C_\d|^{-1} T_{\d\d}  \whpsi +
\whpsi C_\u^{-1} T_{\u\u}  + \whpsi C_\u^{-1} T_{\u\d}    \whpsi =0.
\end{equation}
Furthermore,  $\whpsi \vone < \vone$ and  $\whU \vone <
  \vzero$ if $\mu < 0$, while $\whpsi\vone = \vone$ and $\whU \vone =
  \vzero$ if $\mu \geq 0$.
\qed
\end{cor}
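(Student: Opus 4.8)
The plan is to mirror the argument given for Theorem~\ref{t:psiNu}, with the roles of $\Su$ and $\Sd$ interchanged, and to be careful about how the direction of the stationary drift affects the dichotomy. First I would set up the integral representation analogous to~(\ref{e:psia}): starting in a phase of $\Sd$ with $X(0)=0$, the process decreases, and I would write
\begin{equation}
   \label{e:psiast}
\whpsi = \int_0^\infty   e^{|C_\d|^{-1} T_{\d\d} y} \, |C_\d|^{-1} T_{\d\u} \,
e^{\whU y} \, \ud y,
\end{equation}
where $e^{|C_\d|^{-1} T_{\d\d} y}$ is the probability that $\varphi$ stays in $\Sd$ until the fluid has dropped by $y$, the middle factor is the probability of switching into $\Su$ between depths $y$ and $y + \ud y$, and $e^{\whU y}$ is the probability that the fluid, now increasing, climbs back up the remaining $y$ units to the initial level. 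Pre-multiplying by $|C_\d|^{-1} T_{\d\d}$ and integrating by parts gives the Sylvester equation
\begin{equation}
   \label{e:psibst}
|C_\d|^{-1} T_{\d\u}  +     |C_\d|^{-1} T_{\d\d} \whpsi  + \whpsi \whU = 0,
\end{equation}
which characterises $\whpsi$ uniquely once $\whU$ is known.

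Next I would establish~(\ref{e:ust}) by the same infinitesimal-generator computation as in~(\ref{e:upsi})--(\ref{e:uNpsi}): for $i,j \in \Su$, the $(i,j)$ entry of $\whU h$ decomposes into the probability that $\varphi$ jumps directly from $i$ to $j$, namely $(C_\u^{-1} T_{\u\u})_{ij} h$, plus the probability that $\varphi$ jumps from $i$ into $\Sd$ at some level, the fluid dips down, and upon first return to that level the phase is $j$, namely $(C_\u^{-1} T_{\u\d} \whpsi)_{ij} h$; all other contributions are $o(h)$. Substituting~(\ref{e:ust}) into~(\ref{e:psibst}) then yields the Riccati equation~(\ref{e:riccatist}). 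Minimality of $\whpsi$ among nonnegative solutions follows by the standard probabilistic truncation argument (monotone approximation by taboo probabilities restricting the number of excursions), exactly as cited for $\Psi$ in Ahn and Ramaswami~\cite{ar03} and da Silva Soares and Latouche~\cite{dssl03}; alternatively one invokes the symmetry $c_i \mapsto -c_i$, which maps the $(\Su,\Sd)$ setup to the $(\Sd,\Su)$ setup and carries $\Psi \mapsto \whpsi$, $U \mapsto \whU$, $T \mapsto T$ (the censoring of $\Sz$ is unaffected), so that Theorem~\ref{t:psiNu} applies verbatim to the reflected process.

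For the final dichotomy, note that under the sign change $c_i \mapsto -c_i$ the stationary drift $\mu = \valpha\tr\vcc$ changes sign, while $\valpha$ is unchanged. Theorem~\ref{t:psiNu} (and the bulleted facts preceding it) state that $\Psi\vone = \vone$, $U\vone = \vzero$ when the drift is $\le 0$ and $\Psi\vone < \vone$, $U\vone < \vzero$ when the drift is $>0$. Applying this to the reflected process, whose drift is $-\mu$, gives $\whpsi\vone = \vone$, $\whU\vone = \vzero$ when $-\mu \le 0$, i.e.\ $\mu \ge 0$, and $\whpsi\vone < \vone$, $\whU\vone < \vzero$ when $-\mu > 0$, i.e.\ $\mu < 0$, which is exactly the claimed statement. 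The one point needing care — the main (minor) obstacle — is the treatment of the boundary case $\mu = 0$: one must check it lands on the \emph{stochastic} side for $\whpsi$, which it does, since the reflected drift $-\mu$ is then also $0$ and hence $\le 0$, and the null-recurrence of the driftless fluid process (Asmussen~\cite[Prop.~2.10]{asmus03}) guarantees almost-sure return to any level, so $\whpsi\vone = \vone$. Everything else is a routine transcription of the Theorem~\ref{t:psiNu} argument.
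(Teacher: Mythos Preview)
Your proposal is correct and takes essentially the same approach as the paper: the paper's entire proof is the single sentence ``We easily adapt the argument in Theorem~\ref{t:psiNu} to prove the following,'' and your plan carries out precisely that adaptation. Your additional observation that the sign reversal $c_i \mapsto -c_i$ transports Theorem~\ref{t:psiNu} directly to the dual setting (swapping $\Su \leftrightarrow \Sd$, $\Psi \leftrightarrow \whpsi$, $U \leftrightarrow \whU$, and $\mu \mapsto -\mu$) is a clean way to package the adaptation and to read off the drift dichotomy without repeating the analysis.
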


\section{Number of crossings}
\label{s:crossings}

As a preliminary step to determining the expected sojourn time
$\wM(x)$ in $(0,x]$ during an interval $(\delta_n, \theta_{n+1}]$, we
analyse the number of times the fluid crosses a given level during a
regenerative interval. We define $N_{ij}(x)$ to be the expected number
of times $(X,\varphi) = (X(0)+x,j)$ during the interval $(0, \Theta]$,
given that $\varphi(0)=i$, for $i$ and $j$ in $\Su$.

\begin{thm}
   \label{t:K}
The matrix $N(x)$ is given by $N(x)=e^{Kx}$,  where 
\begin{equation}
   \label{e:K}
K = C_\u^{-1} T_{\u\u}  + \Psi |C_\d|^{-1} T_{\d\u}
\end{equation}
is a matrix indexed by $\Su$.
\qed
\end{thm}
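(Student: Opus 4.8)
The plan is to prove that the family $\{N(x):x>0\}$ is a matrix semigroup, so that $N(x)=e^{Kx}$ for $K=N'(0^+)$, and then to identify $K$ by a short-level analysis; the value of $K$ so obtained will be exactly the right-hand side of~(\ref{e:K}).

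\textbf{The semigroup property.} Take $X(0)=0$ without loss of generality and fix $x,y>0$. Look at the excursions of the path above level $x$, i.e. the maximal time intervals on which $X>x$ (the last one unbounded if $X\to+\infty$). Each such excursion opens with an up-crossing of level $x$, necessarily in a phase of $\Su$ by continuity of $X$, and conversely every up-crossing of $x$ before $\Theta$ opens exactly one excursion above $x$; hence, starting from phase $i$, the expected number of excursions above $x$ that open in phase $k\in\Su$ is $N_{ik}(x)$. Every up-crossing of level $x+y$ before $\Theta$ lies inside one of these excursions, and by the strong Markov property, given that an excursion above $x$ opens in phase $k$, in the coordinate $X-x$ it is distributed as the path started from $(0,k)$ and stopped at its first return $\Theta$ to level $0$; up-crossings of $x+y$ inside it are up-crossings of $y$, whose expected number ending in phase $j$ is $N_{kj}(y)$. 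Summing over excursions above $x$ and conditioning on the phases in which they open,
\[
N_{ij}(x+y)=\sum_{k\in\Su}N_{ik}(x)\,N_{kj}(y),\qquad i,j\in\Su,
\]
i.e. $N(x+y)=N(x)N(y)$.

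\textbf{The generator.} Now expand $N(h)$ as $h\downarrow0$. Starting from $(0,k)$ with $k\in\Su$, enumerate the successive visits $v_1<v_2<\cdots$ of the path to level $h$ before $\Theta$: those with odd index are up-crossings (phase in $\Su$), those with even index down-crossings (phase in $\Sd$). Climbing from $0$ to $h$ costs $O(h)$ units of fluid, during which the phase stays in $\Su$ unless it jumps to $\Sd$, an event of probability $O(h)$; to first order $\P[v_1<\Theta,\ \varphi(v_1)=j]=(I+C_\u^{-1}T_{\u\u}h)_{kj}+o(h)$ for $j\in\Su$. A second up-crossing $v_3$ needs, first, that the excursion above $h$ opened at $v_1$ return to level $h$ in some phase $\ell\in\Sd$ — which, by definition of $\Psi$ and its independence of the level, has probability $\Psi_{\varphi(v_1),\ell}$ — and then that from $(h,\ell)$ the phase switch back to $\Su$ before $X$ reaches $0$, which occurs at rate $|C_\d|^{-1}T_{\d\u}$ per unit of fluid descended and so has probability $(|C_\d|^{-1}T_{\d\u})_{\ell j}h+o(h)$; since $\varphi(v_1)=k$ up to $O(h)$, $\P[v_3<\Theta,\ \varphi(v_3)=j]=(\Psi|C_\d|^{-1}T_{\d\u})_{kj}h+o(h)$. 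Every later up-crossing needs one more return from below level $h$ and costs $O(h^2)$. Adding these, $N(h)=I+Kh+o(h)$ with $K$ the right-hand side of~(\ref{e:K}); since also $N(h)\to I$, the semigroup identity gives $N(x)=\lim_{n\to\infty}N(x/n)^n=e^{Kx}$.

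\textbf{The main difficulty.} The delicate part is to turn the first-order estimates of the second step into rigorous ones: one must check that the ``wiggles'' confined to a fluid band of width $h$ — a phase excursion from $\Su$ into $\Sd$ and back, or the reverse — really have probability $O(h^2)$, and one must handle the transient case $\mu>0$ in which $\Theta$ may be infinite, so that it must separately be argued that $N(x)$ has finite entries (on $\{\Theta=\infty\}$ the path ultimately leaves level $x$ forever, contributing only finitely many crossings). Both points can be handled by deriving, instead of the infinitesimal relation, an exact first-passage equation for $N(\cdot)$ — conditioning on the first epoch after $0$ at which the up/down character of the phase changes — along the lines of the sources cited for Theorem~\ref{t:psiNu}, or by invoking the continuity of all the quantities involved in the entries of $Q$.
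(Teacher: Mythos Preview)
Your proposal is correct and follows essentially the same route as the paper: first establish the semigroup identity $N(x+y)=N(x)N(y)$ by grouping crossings of level $x+y$ according to the successive up-crossings of level $x$, then identify the generator $K$ by a first-order expansion of $N(h)$ for small $h$, decomposing into the first up-crossing (contributing $I+C_\u^{-1}T_{\u\u}h$) and the second up-crossing after a $\Psi$-excursion and a switch back to $\Su$ (contributing $\Psi|C_\d|^{-1}T_{\d\u}h$). The paper gives the same heuristic, written out in terms of $Q$ and then reduced to $T$, and defers the formal proof to Ramaswami~\cite{ram99}; your closing paragraph on the technical difficulties is an honest acknowledgment of exactly the points that such a formal proof must address.
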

The formal proof is given in Ramaswami~\cite{ram99} and proceeds as
follows.%
\footnote{The argument is similar to the one  used in Latouche and
  Ramaswami~\cite[Theorem 6.2.7]{lr99} in the context of
  Quasi-Birth-and-Death processes.}  
Assume without loss of generality
that $X(0)=0$, take $x$ and $y >0$, and count the expected number of
visits to $(x+y,j)$, starting from $(0,i)$, before the first return to
level 0.  We group the visits to $(x+y,j)$ into subintervals between
successive up-crossings of level $x$, and write that
\[
N_{ij}(x+y) = \sum_{k \in \Su} N_{ik}(x) N_{kj}(x, x+y), \qquad i, j \in \Su,
\]
where $N_{kj}(x, x+y)$ is the expected number of visits to $(x+y,j)$
between two successive visits to level $x$, starting from $(x,k)$.
Remember that it is the {\em distance} $y$ between the target $x+y$
and the starting level $x$ matters, not the specific location of the
latter.  Thus, $N(x, x+y) = N(0,y)$ and so the equation above is also
written as $N(x+y)=N(x)N(y)$.  From the semi-group property, we
conclude that there exists a matrix $K$ such that $N(x)= e^{Kx}$.

Next, we approximate $(e^{Kh})_{ij}$ for $h$ small and $i \not= j$ as
\begin{align*}
(e^{Kh})_{ij}  & = K_{ij}h + o(h) 
\\
 & = c_i^{-1} Q_{ij} h + \sum_{k \in \Sz} c_i^{-1} Q_{ik}
   ((-Q_{00})^{-1} Q_{0\u})_{kj} h
\\
 & \quad + (1+c_i^{-1} Q_{ii} h) \sum_{\ell \in \Sd} \Psi_{i \ell}
  |c_\ell|^{-1} Q_{\ell j}  h   
\\
 & \quad + (1+c_i^{-1} Q_{ii} h) \sum_{\ell \in \Sd} \Psi_{i \ell}
  |c_\ell|^{-1} \sum_{k \in \Sz}  |c_\ell|^{-1}  Q_{\ell k}
   ((-Q_{00})^{-1} Q_{0\u})_{kj} h   + o(h).
\end{align*}
The first two terms are about the process being in phase $j$ at the
first crossing of level $h$;
\begin{itemize}
\item the first  is the probability that during the interval of time
$c_i^{-1} h$ the phase process changes from $i$ to $j$ and is still in
$j$ when the fluid crosses level $h$;
\item the second term is the
probability that during 
that interval of time, the phase switches to $k$ in $\Sz$ at some
unspecified level below $h$, remains at
that level until there is a jump to $j$, and is still in
$j$ when the fluid crosses level $h$.
\end{itemize}
The next two terms cover the circumstances where the process does not
leave phase $i$ before crossing level $h$, returns to level $h$ in
some phase $\ell$ in $\Sd$, switches to phase $j$ during the interval
of time $|c_\ell|^{-1} h$ and eventually crosses level $h$ in phase
$j$.  The $o(h)$ term captures the negligible probabilities of
crossing level $h$ more than once.  Simple manipulations give us
\begin{align*}
(e^{Kh})_{ij} & = (c_i^{-1} T_{ij}  + \sum_{\ell \in \Sd} \Psi_{i \ell}
   |c_\ell|^{-1} T_{\ell j} )h +o(h), \qquad  i\not = j \in \Su.
\end{align*}
A similar argument holds for $i=j$ and so is (\ref{e:K}) justified.

We partition the matrix $\wM(x)$ in three blocks, corresponding to the
three subsets $\Su$, $\Sd$ and $\Sz$ of phases, and we deal with each
one separately.

\begin{thm}
   \label{t:mofx}
The matrix of expected sojourn times in $(0,x]$ during a regenerative
interval is
\[
\wM(x) =
\vligne{\wM_\u(x) & \wM_\d(x) & \wM_0(x)}
\]
with 
\begin{align}
   \label{e:mu}
\wM_\u(x)  & = \FF(K,x) C_\u^{-1},
\\
   \label{e:md}
\wM_\d(x) & =  \FF(K,x)\Psi |C_\d|^{-1},
\\
   \label{e:mox}
\wM_0(x) & = \FF(K,x) \vligne{C_\u^{-1} & \Psi |C_\d|^{-1} }
\vligne{Q_{\u0} \\ Q_{\d0}}   (- Q_{00} )^{-1}.
\end{align}
where $\FF(K,x) = \int_0^x e^{Ku} \, \ud u$.
\end{thm}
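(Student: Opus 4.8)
The plan is to compute each of the three blocks $\wM_\u(x)$, $\wM_\d(x)$, $\wM_0(x)$ by the same bookkeeping idea that underlies Theorem~\ref{t:K}: decompose the time spent in $(0,x]\times\{j\}$ during $(\delta_n,\theta_{n+1}]$ according to the level $u\in(0,x]$ at which the contribution accrues, and at each such level invoke $N(u)=e^{Ku}$ for the expected number of up-crossings of $u$ starting from $\delta_n$ in a phase of $\Su$. First I would treat the $\Su$ block. When $\varphi\in\Su$ at level $u$, the process is increasing, so the expected time spent per passage through the infinitesimal slab $(u,u+\ud u)$ in phase $k\in\Su$ is $c_k^{-1}\,\ud u$; the expected number of such passages starting from $(\delta_n)$ is $(e^{Ku})_{\cdot k}$. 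Integrating over $u\in(0,x]$ gives $\wM_\u(x)=\int_0^x e^{Ku}\,\ud u\,C_\u^{-1}=\FF(K,x)C_\u^{-1}$, which is (\ref{e:mu}).

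Next I would handle the $\Sd$ block. Here the reasoning is: to be at level $u$ in a phase of $\Sd$, the process must have reached level $u$ from below in some phase of $\Su$ (expected count $e^{Ku}$), then crossed from $\Su$ into $\Sd$ at level $u$ — this is exactly the event governed by $\Psi$, which records the phase in $\Sd$ at the first return to the starting level — and then, being in phase $\ell\in\Sd$, it spends expected time $|c_\ell|^{-1}\,\ud u$ in the slab while decreasing. Assembling, $\wM_\d(x)=\int_0^x e^{Ku}\,\Psi\,|C_\d|^{-1}\,\ud u=\FF(K,x)\,\Psi\,|C_\d|^{-1}$, which is (\ref{e:md}). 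For the $\Sz$ block one uses that excursions into $\Sz$ occur at a fixed level with no change in $X$; conditioned on entering $\Sz$ at level $u$, the expected time spent there in phase $j\in\Sz$ before returning to $\Su\cup\Sd$ is $((-Q_{00})^{-1})_{\cdot j}$, and the rate of entering $\Sz$ at level $u$ (per unit level) from an up-phase $k$ is $c_k^{-1}Q_{k0}$, while from a down-phase $\ell$ it is $|c_\ell|^{-1}Q_{\ell 0}$ but such a down-phase is itself reached via $\Psi$ as in the previous block. Combining gives $\wM_0(x)=\FF(K,x)\bigl[\,C_\u^{-1}\mid \Psi|C_\d|^{-1}\,\bigr]\vligne{Q_{\u0}\\ Q_{\d0}}(-Q_{00})^{-1}$, which is (\ref{e:mox}).

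I expect the main obstacle to be justifying rigorously that the contributions from successive up-crossings of a level $u$, and from the alternation of $\Su$- and $\Sd$-phases at that level, combine multiplicatively as $e^{Ku}$ times a ``per-visit'' expectation — i.e. that one may legitimately interleave the renewal-type decomposition in the level variable with the Markov structure, exactly as in the heuristic derivation of $K$. This is the same subtlety already present (and deferred to Ramaswami~\cite{ram99}) in Theorem~\ref{t:K}, so in keeping with the expository style of the paper I would present the above as a high-level justification and cite \cite{ram99} and \cite[Section 5.5]{lr99} for the formal argument, noting that the $\Sz$ block follows from (\ref{e:mo}) and the censoring relation (\ref{e:phim}) in the same way the corresponding pieces of $M(0)$ and $M(0,x]$ were obtained. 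The only genuinely new check is dimensional/indexing consistency of the block decomposition, which is routine.
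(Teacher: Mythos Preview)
Your proposal is correct and follows essentially the same approach as the paper: decompose the sojourn time by level $u\in(0,x]$, use $N(u)=e^{Ku}$ for the expected number of up-crossings, convert crossings to time via $c_j^{-1}\,\ud u$ (respectively $|c_\ell|^{-1}\,\ud u$ after inserting $\Psi$ for the $\Sd$ block), and handle $\Sz$ via excursions governed by $(-Q_{00})^{-1}$. The paper likewise gives only a heuristic for (\ref{e:mu}) and (\ref{e:md}), citing \cite{ln15b} for the formal argument, and provides the excursion justification for (\ref{e:mox}) exactly as you do; your closing remark invoking (\ref{e:mo}) and (\ref{e:phim}) for the $\Sz$ block is slightly misplaced (those concern $M(0)$ and the factorisation $M(0,x]=\Phi\wM(x)$, not $\wM_0(x)$ itself), but your main excursion argument for $\wM_0(x)$ is the right one.
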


\begin{proof}
A formal proof of (\ref{e:mu}, \ref{e:md}) is given in Latouche and
Nguyen~\cite{ln15b}, we give here a heuristic argument, and we give a
justification for (\ref{e:mox}).

For $i$, $j$ in $\Su$, we argue that
\[
\wM_{ij} (x) = \int_0^x N_{ij}(u) c_j^{-1} \, \ud u.
\]
To see this, we interpret $c_j^{-1 }\ud u$ as the expected time spent
by $X$ in $(u, u+ \ud u)$ each time there is a visit to $(u,j)$, and
multiply this by the expected number $N_{ij}(u)$ of such visits, so
that $N_{ij}(u) c_j^{-1} \, \ud u$ is the total {\em time} spent by
the process.  There only remains for us to integrate over the interval
$(0,x)$ and use Theorem~\ref{t:K}.

Next, we define the matrix $N'(x)$ of expected number of
down-crossings of level $x$ in a phase of $\Sd$.  As $X(0)=0 < x$, of
necessity each down-crossing of level $x$ is preceded by an
up-crossing of that same level.  So, if we condition on the phase at
the up-crossing, we find that 
\[
N'_{ij}(x) = \sum_{k \in \Su} N_{ik}(x) \Psi_{kj}, \qquad i \in \Su, j
\in \Sd
\] 
and (\ref{e:md}) is proved by repeating the argument of (\ref{e:mu}).

Finally, let us use the expression {\em excursion to $\Sz$} for
intervals of time spent in $(0,x) \times \Sz$ and separated by visits
to $(0,x) \times \Su\cup\Sd$.  Equation 
(\ref{e:mox}) expresses $\wM_0(x)$ as the product of the
expected number of excursions by the time spent in
individual phases during excursion.  Indeed,
$Q_{\u0}$ and $Q_{\d0}$ are the matrices of transition rates
from $\Su$ and $\Sd$ to $\Sz$, and 
$\FF(K,x) \vligne{C_\u^{-1} & \Psi
  |C_\d|^{-1} }$ is the matrix of expected time spent in the states of 
$(0,x) \times \Su\cup\Sd$.  Thus, the component $N''_{ik}(x)$ of the
product 
\[
N''(x) = \FF(K,x) \vligne{C_\u^{-1} & \Psi |C_\d|^{-1} }
\vligne{Q_{\u0} \\ Q_{\d0}}
\] 
is the expected number of excursions that start in phase $k$.  As
$(-Q_{00}^{-1})_{kj}$ is the expected time spent in $j$ during an
excursion which starts in $k$,  this proves~(\ref{e:mox}).
\end{proof}

As we shall see in Section~\ref{s:factorisation}, all the eigenvalues
of $K$ are in $\CC_{\leq 0}$, that is, they  have a negative real
part.  If  $\mu < 0$, the real parts are all strictly negative,
 otherwise $K$ has one eigenvalue equal to 0, the
others being in $\CC_{<0}$.  With this, we may express the integral 
$\int_0^x e^{Ku} \, \ud u$ as follows.
\begin{itemize}
\item
   If $\mu < 0$, then 
\begin{equation}
   \label{e:muneg}
\FF(K,x) = (-K^{-1})(I - e^{Kx}),
\end{equation}
\item
  if $\mu \geq 0$, then 
\[
\FF(K,x) = (-K^\#) (I - e^{Kx}) + x \vv
  \vu\tr,
\]
 where $\vv$ and $\vu$ are the normalised right- and left-eigenvectors
  of $K$ for the eigenvalue 0, and $K^\#$ is the group inverse%
\footnote{The group inverse $K^\#$ of $K$ is the unique matrix such
  that $K^\# K = I - \vv \vu\tr$, $K^\# \vv = \vzero$.   See Campbell
  and Meyer~\cite[Chapter~7]{cm91}.} 
 of $K$.
\end{itemize}

In the same manner as we defined $\whU$ and $\whpsi$, we may define
$N^*(x)$ as the number of crossings of level $X(0)-x$ in a phase of $\Sd$,
starting from a phase in $\Sd$, before the first return to level
$X(0)$, and we have the following corollary to Theorem \ref{t:K}:

\begin{cor}
The matrix $N^*(x)$ is given by $N^*(x)=e^{\whK x}$,  where 
\begin{equation}
   \label{e:Kst}
\whK = |C_\d|^{-1} T_{\d\d}  + \whpsi C_\u^{-1} T_{\u\d}
\end{equation}
is  indexed by $\Sd$.
Furthermore, the eigenvalues of $\whK$ are in $\CC_{<0}$, with the
exception of one eigenvalue equal to 0 if $\mu \leq 0$.
\qed
\end{cor}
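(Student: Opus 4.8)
The plan is to mirror, for the downward-regulated picture, the two arguments already carried out for $N(x)=e^{Kx}$ in Theorem~\ref{t:K} and for the relationship between $U$ and $\Psi$ in Theorem~\ref{t:psiNu}. The quantity $N^*(x)$ counts, starting from a phase in $\Sd$ and before the first return to the initial level $X(0)$, the expected number of down-crossings of the level $X(0)-x$ in a phase of $\Sd$. First I would establish the semi-group identity $N^*(x+y)=N^*(x)N^*(y)$: fix $X(0)=0$ without loss of generality, take $x,y>0$, and group the visits to $(-x-y,j)$ into the subintervals delimited by successive down-crossings of level $-x$. Conditioning on the phase $k\in\Sd$ at a down-crossing of $-x$, and using that the transition structure depends only on the \emph{distance} $y$ below the starting level and not on its absolute position, gives $N^*_{ij}(x+y)=\sum_{k\in\Sd}N^*_{ik}(x)N^*_{kj}(x,x+y)$ with $N^*(x,x+y)=N^*(0,y)=N^*(y)$. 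Hence $N^*(x)=e^{\whK x}$ for some generator-like matrix $\whK$ indexed by $\Sd$.

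Next I would identify $\whK$ via a small-$h$ expansion of $(e^{\whK h})_{ij}$ for $i\neq j$ in $\Sd$, exactly paralleling the computation displayed after Theorem~\ref{t:K}. Starting in phase $i\in\Sd$ and following the fluid down by an amount $h$ (which takes time $|c_i|^{-1}h$), the process reaches level $-h$ in phase $j$ either (i) by a direct phase change $i\to j$ within $\Sd$, contributing $|c_i|^{-1}T_{ij}h$ after censoring out the $\Sz$-excursions as in~(\ref{e:t}); or (ii) by switching to a phase $\ell\in\Su$ at some level below $-h$, driving the fluid back up, and returning to that level in a phase of $\Sd$ via $\whpsi$, then switching to $j$ and crossing $-h$ in phase $j$; this contributes $\sum_{\ell\in\Su}\whpsi_{i\ell}\,|c_\ell|^{-1}T_{\ell j}\,h$, again after absorbing the $\Sz$-visits into $T$. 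Collecting terms, treating $i=j$ analogously, and reading off the generator yields
\[
\whK = |C_\d|^{-1}T_{\d\d} + \whpsi\,C_\u^{-1}T_{\u\d},
\]
which is~(\ref{e:Kst}). The roles of $\Su$ and $\Sd$, and of $\Psi$ and $\whpsi$, are simply interchanged relative to~(\ref{e:K}).

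For the spectral claim I would invoke the $\mu$-dichotomy already recorded for the hatted matrices in Corollary~\ref{t:psiNust}. By analogy with~(\ref{e:u}) one has $U = |C_\d|^{-1}T_{\d\d} + |C_\d|^{-1}T_{\d\u}\Psi$, and the very same manipulation that relates $K$ and $\Psi$ to $U$ shows that $\whK$ is conjugate (or otherwise spectrally tied) to $\whU = C_\u^{-1}T_{\u\u} + C_\u^{-1}T_{\u\d}\whpsi$ from~(\ref{e:ust}); more directly, $\whK$ is the generator of the Markov process $\{\gamma^{\d*}(x)\}$ of phases seen at first down-crossings, which is honest precisely when $\whpsi\vone=\vone$, i.e.\ when $\mu\le 0$, and strictly substochastic when $\mu>0$. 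Since $\Sd\ne\emptyset$ forces the process $X$ not to be monotone, $\whK$ is irreducible as a subgenerator, so it has a simple dominant eigenvalue which is $0$ when $\mu\le 0$ and strictly negative when $\mu>0$; all other eigenvalues lie in $\CC_{<0}$ by the Perron–Frobenius theory for (sub)generators. This matches the statement: the eigenvalues of $\whK$ are in $\CC_{<0}$ except for one eigenvalue equal to $0$ when $\mu\le 0$.

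The main obstacle is the small-$h$ expansion used to pin down $\whK$: one must be careful that the $\Sz$-excursions and the possibly-many oscillations around level $-h$ during the up-then-down detour through $\Su$ all contribute only at order $o(h)$, so that the censored generator $T$ (rather than $Q$) appears and no cross-terms survive. This is exactly the bookkeeping done after Theorem~\ref{t:K}, transposed; since that argument is only sketched there, I would likewise present it as a heuristic and cite the formal treatment (Ramaswami~\cite{ram99}, and~\cite{ln15b} for the hatted analogues) for the rigorous accounting.
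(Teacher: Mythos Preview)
Your derivation of the formula~(\ref{e:Kst}) is exactly the duality argument the paper intends: the Corollary is stated without proof precisely because it is the mirror image of Theorem~\ref{t:K}, and your semi-group argument and small-$h$ expansion reproduce that reasoning with the roles of $\Su$ and $\Sd$ swapped. (The prose in your step~(ii) is garbled --- $\whpsi$ records the return to the starting level in a phase of $\Su$, not $\Sd$, so the excursion is: cross $-h$ in phase $i$, return to $-h$ from below in phase $\ell\in\Su$ via $\whpsi_{i\ell}$, then switch to $j\in\Sd$ while climbing back toward~$0$ --- but your formula is correct.)

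The spectral argument, however, contains a genuine error. The matrix $\whK$ is \emph{not} the generator of a Markov process: $N^*(x)=e^{\whK x}$ is a matrix of expected \emph{counts}, not of transition probabilities, so its row sums can exceed~$1$ and $\whK$ is not a subgenerator in the usual sense. The Markov process of phases at first down-crossings of level $-x$ is $\{\gamma^\d(x)\}$, and its generator is $U$, not $\whK$. Moreover, your proposed criterion ``honest precisely when $\whpsi\vone=\vone$'' points the wrong way: by Corollary~\ref{t:psiNust}, $\whpsi\vone=\vone$ holds when $\mu\ge 0$, whereas the Corollary asserts an eigenvalue~$0$ for $\whK$ when $\mu\le 0$. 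Your Perron--Frobenius heuristic would therefore yield the opposite sign condition. You also write that $\whK$ is spectrally tied to $\whU$, but $\whU$ is indexed by $\Su$ while $\whK$ is indexed by $\Sd$; the correct relation, established later in~(\ref{e:kNust}), is $\whK(I-\whpsi\Psi)=(I-\whpsi\Psi)U$, so $\whK$ is similar to $U$. The paper does not attempt to prove the spectral claim here at all; it is deferred to the Wiener--Hopf factorisation in Section~\ref{s:factorisation} (Theorem~\ref{t:roots}), where the eigenvalues of $-\whK$ are identified with $\lambda_{m_\u+1},\ldots,\lambda_{m_\u+m_\d}$ and the location of $\lambda_{m_\u+1}$ relative to~$0$ is read off from the sign of~$\mu$.
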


\section{Stationary distribution}
\label{s:stationary}

We collect the results obtained in the preceding sections and we express
as follows the stationary distribution of the fluid queue, when it exists.

\begin{thm}
   \label{t:stationary}
If $\mu < 0$, the regulated process $(Y(t), \varphi(t))$ has a
stationary distribution, given by
\[
\vG\tr(x) = c ( \vligne{0 & \vp\tr_\d & \vp\tr_0}
+ (\vp\tr_\d Q_{\d\u} + \vp\tr_0 Q_{0 \u}) \wM(x)),
\]
where 
\begin{align}
\nonumber
\vligne{\vp\tr_\d & \vp\tr_0} & = \vligne{\vrho\tr & 0} \left(-\vligne{Q_{\d\d}&
                                                                 Q_{\d0}
  \\  Q_{0\d} & Q_{00} }^{-1} \right) ,
\\[0.25\baselineskip]
   \label{e:mtilde}
 \wM(x) & = (-K)^{-1} (I - e^{Kx} ) \vligne{C_\u^{-1} & \Psi |C_\d|^{-1}
                                           & \Gamma},
\\[0.25\baselineskip]
\nonumber
 \Gamma & = (C_\u^{-1}   Q_{\u0}  +    \Psi |C_\d|^{-1} Q_{\d0})   (- Q_{00} )^{-1},
\end{align}
and $c$ is the normalising constant, with
\[
c^{-1} = \vligne{ \vp\tr_\d & \vp\tr_0} (\vone + \vligne{Q_{\d\u} \\ Q_{0
    \u}} (-K)^{-1} (C_\u^{-1} \vone + \Psi |C_\d|^{-1} \vone + \Gamma\vone)).
\]
\end{thm}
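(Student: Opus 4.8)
The plan is to assemble Theorem~\ref{t:stationary} directly from the formula~(\ref{e:G}), $\vG\tr(x) = (\vrho\tr\vm)^{-1}\vrho\tr M(x)$, by substituting the expressions for $M(x)$ obtained in Section~\ref{s:regenerative} and Section~\ref{s:crossings}. First I would recall the decomposition $M(x) = M(0) + M(0,x]$, insert the explicit form of $M(0)$ from~(\ref{e:mo}), and use $M(0,x] = \Phi\,\wM(x)$ from~(\ref{e:phim}) together with the blocks of $\wM(x)$ from Theorem~\ref{t:mofx}. The key observation is that the product $\vrho\tr\Phi$ can be simplified: combining the defining formula~(\ref{e:Phi}) for $\Phi$ with~(\ref{e:mo}) shows that $\vrho\tr M(0)$ equals $\vligne{0 & \vp\tr_\d & \vp\tr_0}$, and that $\vrho\tr\Phi = \vp\tr_\d Q_{\d\u} + \vp\tr_0 Q_{0\u}$, where $\vligne{\vp\tr_\d & \vp\tr_0}$ is exactly the vector defined in the statement. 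This is where the bulk of the bookkeeping lies, since one must track the $\Sd$, $\Sz$, and $\Su$ blocks through the block-matrix products.

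Next I would treat the $\mu<0$ case: by the spectral remark following Theorem~\ref{t:mofx}, all eigenvalues of $K$ have strictly negative real part when $\mu<0$, so $K$ is invertible and $\FF(K,x) = (-K)^{-1}(I-e^{Kx})$ by~(\ref{e:muneg}). Substituting this into~(\ref{e:mu})--(\ref{e:mox}) gives the stated form of $\wM(x)$, with the $\Sz$-block collapsing into the matrix $\Gamma = (C_\u^{-1}Q_{\u0} + \Psi|C_\d|^{-1}Q_{\d0})(-Q_{00})^{-1}$ after factoring $\FF(K,x)$ out on the left. Then $\vG\tr(x)$ takes the form $c(\vligne{0 & \vp\tr_\d & \vp\tr_0} + (\vp\tr_\d Q_{\d\u} + \vp\tr_0 Q_{0\u})\wM(x))$, with $c = (\vrho\tr\vm)^{-1}$ still to be identified.

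Finally I would compute the normalising constant. Since $\vG\tr(\infty)\vone = 1$, I set $x\to\infty$; because all eigenvalues of $K$ have strictly negative real part, $e^{Kx}\to 0$, so $\wM(\infty) = (-K)^{-1}\vligne{C_\u^{-1} & \Psi|C_\d|^{-1} & \Gamma}$. Applying $\vone$ on the right, using $\vligne{0 & \vp\tr_\d & \vp\tr_0}\vone = \vligne{\vp\tr_\d & \vp\tr_0}\vone$, and collecting terms yields precisely the stated expression for $c^{-1}$; equivalently one checks it equals $\vrho\tr\vm$, confirming consistency with~(\ref{e:G}). The existence of the stationary distribution when $\mu<0$ — i.e.\ positive recurrence of the regenerative chain, equivalently $\vrho\tr\vm < \infty$ — is the one ingredient that is not purely algebraic; I would invoke the discussion in Section~\ref{s:regenerative} (finiteness of the expected cycle length, to be established via the factorisation results of Section~\ref{s:factorisation}) rather than reprove it here. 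The main obstacle is therefore not any single hard step but the careful propagation of the three-way block partition through the matrix products so that the $\Sz$-contributions assemble correctly into $\vp_0$ and $\Gamma$.
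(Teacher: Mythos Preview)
Your proposal is correct and follows essentially the same route as the paper's proof: assemble $\vG\tr(x)$ from~(\ref{e:G}) via the decomposition $M(x)=M(0)+\Phi\,\wM(x)$, invoke Theorem~\ref{t:mofx} together with~(\ref{e:muneg}) to obtain the stated form of $\wM(x)$, and identify $c^{-1}=\vrho\tr\vm$ by letting $x\to\infty$. The paper's own proof is terser, leaving the identifications $\vrho\tr M(0)=\vligne{0 & \vp\tr_\d & \vp\tr_0}$ and $\vrho\tr\Phi=\vp\tr_\d Q_{\d\u}+\vp\tr_0 Q_{0\u}$ under the phrase ``simple calculations complete the proof,'' but the structure is the same.
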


\debproof The expression (\ref{e:mtilde}) for $\wM(x)$ results from
Theorem~\ref{t:mofx} and Equation~(\ref{e:muneg}).  Furthermore, the
vector $\vm$ of conditional expected duration of of a regeneration
interval, given the initial phase, is given by
\begin{align}
  \nonumber 
\vm & = \lim_{x \rightarrow \infty} M(x) \vone 
\\
  \nonumber 
  & = M(0) \vone + \lim_{x \rightarrow \infty} \Phi \wM(x) \vone
    \qquad \mbox{by (\ref{e:mo}, \ref{e:phim}),}
\\
   \label{e:smallm}
  & = M(0) \vone + \Phi (-K)^{-1} (C_\u^{-1} \vone + \Psi |C_\d|^{-1}
    \vone +
    \Gamma \vone)
\end{align}
by (\ref{e:mtilde}).  The normalising constant $c$ equals $(\vrho\tr
\vm)^{-1}$ by (\ref{e:G}); simple calculations complete the proof.
\finproof

The stationary distribution appears under various forms in the literature
(Asmussen~\cite{asmus95b}, Govorun {\it et al.}~\cite{glr11b},
Ramaswami~\cite{ram99}, Rogers~\cite{roger94}); the
matrix $\wM(x)$ is a common feature in papers that rely on
matrix-analytic methods, the vector of probability mass at
zero and the left-factor of $\wM(x)$ are given very different
expressions, depending on the specific approach followed by the authors. 

\section{Wiener-Hopf factorisation}
\label{s:factorisation}

The matrices $U$, $\Psi$ and $K$ defined in Sections \ref{s:passage}
and \ref{s:crossings} are related in many ways.  Obviously, $U$ and
$K$ are determined by $\Psi$ through (\ref{e:u}) and (\ref{e:K}),
respectively, but $\Psi$ may be seen as a function of $U$ by
(\ref{e:psib}) and we might combine (\ref{e:riccati}) and (\ref{e:K})
to write
\[
C_\u^{-1} T_{\u\d}  + K \Psi + \Psi |C_\d|^{-1}
T_{\d\d}  =0, 
\]
from which we conclude that $\Psi$ is a function of $K$.  There exist
also numerous relations between the 3-tuples $(U, \Psi, K)$ and $(\whU,
\whpsi, \whK)$, as we briefly discuss below.

The starting point is the Wiener-Hopf factorisation 
\begin{equation}
   \label{e:whu}
C^{-1} T \begin{bmatrix}  I & \Psi \\ \whpsi & I \end{bmatrix}
=
\begin{bmatrix}  I & \Psi \\ \whpsi & I \end{bmatrix}
\begin{bmatrix} \whU & 0 \\ 0 & -U\end{bmatrix}
\end{equation}
proved in Rogers~\cite{roger94}, with
\[
C = \vligne{C_\u \\ & C_\d}.
\]
Equation (\ref{e:whu}) may be proved by direct
verification, starting from the Riccati equations (\ref{e:riccati})
and (\ref{e:riccatist}), and the expressions (\ref{e:u}) and
(\ref{e:ust}) for $U$ and $\whU$.   The equation below may also be
proved by direct verification:
\begin{equation}
   \label{e:whk}
\begin{bmatrix}  I & -\Psi \\ -\whpsi & I \end{bmatrix}
C^{-1} T.
=
\begin{bmatrix} K & 0 \\ 0 & -\whK\end{bmatrix} 
\begin{bmatrix}  I & -\Psi \\ -\whpsi & I \end{bmatrix},
\end{equation}
An immediate consequence is that $U$ and $\whU$ on the one hand, $K$
and $\whK$ on the other hand, share the eigenvalues of $C^{-1} T$.  We
denote by $m_\u$ and $m_\d$ respectively the number of phases in $\Su$
and $\Sd$, and we denote by $\{\lambda_i: 1\leq i \leq m_\u + m_\d\}$
the eigenvalues of $C^{-1} T$, labeled in increasing value of their
real part. 

\begin{thm}
   \label{t:roots}
The eigenvalues $\lambda_{{m_\u}}$ and $\lambda_{{m_\u}+1}$ are real, and are distinct from the others: 
\begin{equation}
   \label{e:separate}
\re(\lambda_1) \leq \cdots \leq
\re(\lambda_{{m_\u}-1})  < \lambda_{m_\u}  \leq \lambda_{{m_\u}+1} < \re(\lambda_{{m_\u}+2}).
 \leq \cdots \leq \re(\lambda_m).
\end{equation}
Furthermore, $\lambda_i$, $1 \leq i \leq m_\u$ are  the eigenvalues
of $K$ and also the eigenvalues of $\whU$, while $\lambda_{{m_\u}+1}$,
\ldots $\lambda_{m_\u+m_\d}$ are those of $-U$ and of $-\whK$.
Finally,
\begin{itemize}
\item if $\mu<0$, then $\lambda_{m_\u} < 0 = \lambda_{{m_\u}+1}$,
\item if $\mu = 0$, then $\lambda_{{m_\u}} = 0 = \lambda_{{m_\u}+1}$,
\item if $\mu >0$, then $\lambda_{{m_\u}} = 0 < \lambda_{{m_\u}+1}$.
\end{itemize}

\end{thm}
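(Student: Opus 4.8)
The plan is to extract everything from the probabilistic identities already established, rather than attacking the characteristic polynomial of $C^{-1}T$ head-on. First I would record the structural facts: by Theorem~\ref{t:roots}'s preamble (equations (\ref{e:whu}) and (\ref{e:whk})), $K$ and $\whU$ are each similar to a block of $C^{-1}T$ and together their spectra account for the $m_\u$ ``left'' eigenvalues, while $-U$ and $-\whK$ account for the $m_\d$ ``right'' ones; so the claim about which matrices own which $\lambda_i$ is essentially bookkeeping once the ordering (\ref{e:separate}) is proven. The substantive content is therefore: (a) $K$ has all eigenvalues in $\CC_{\leq 0}$ with exactly one zero eigenvalue when $\mu \geq 0$ and all strictly negative when $\mu < 0$ (this is quoted in Section~\ref{s:crossings}, so I may assume it); likewise $-U$, i.e.\ $U$, has all eigenvalues in $\CC_{\leq 0}$, with a zero eigenvalue precisely when $\mu \leq 0$; and (b) the two real eigenvalues $\lambda_{m_\u}$ (top of $K$'s spectrum) and $\lambda_{m_\u+1}$ (bottom of $-U$'s spectrum) are real, simple in the sense of being separated from the rest, and straddle $0$ according to the sign of $\mu$.

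For (a), I would use the generator structure: $K = C_\u^{-1}T_{\u\u} + \Psi|C_\d|^{-1}T_{\d\u}$ has nonnegative off-diagonal entries (shown in Section~\ref{s:crossings}) and is irreducible because $\Psi > 0$ and $T$ is irreducible; hence $K$ is the generator of a (sub)Markov process and Perron--Frobenius applies to $K + \omega I$ for large $\omega$. The Perron root of that nonnegative irreducible matrix is real, simple, and strictly dominates the real parts of all other eigenvalues — this gives the strict separation $\re(\lambda_{m_\u-1}) < \lambda_{m_\u}$ at the top of $K$'s spectrum. Whether $\lambda_{m_\u}$ equals $0$ or is strictly negative is exactly the dichotomy $K\vone = \vzero$ (when $\mu \leq 0$, from the facts $\Psi\vone = \vone$, $U\vone = \vzero$ quoted in Section~\ref{s:passage}, combined with (\ref{e:K}) and (\ref{e:u}) and the fact that $T\vone$ has the right sign structure) versus $K\vone < \vzero$ (when $\mu > 0$). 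The mirror argument applied to $\whK = |C_\d|^{-1}T_{\d\d} + \whpsi C_\u^{-1}T_{\u\d}$, which is likewise an irreducible generator, gives simplicity and separation at the bottom of $-U$'s spectrum (since $-U$ and $-\whK$ share eigenvalues by (\ref{e:whk})), and the dichotomy $\whK\vone = \vzero \iff \mu \geq 0$, using Corollary~\ref{t:psiNust}. Combining: the largest eigenvalue of $K$ is $0$ iff $\mu \leq 0$, and the largest eigenvalue of $\whK$ (hence the bottom eigenvalue $\lambda_{m_\u+1}$ of $-U$) is $0$ iff $\mu \geq 0$; reading off the three cases gives the final bulleted list, and in particular shows $\lambda_{m_\u} \leq \lambda_{m_\u+1}$ always, with at most one of them zero unless $\mu = 0$.

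For the remaining separation $\lambda_{m_\u+1} < \re(\lambda_{m_\u+2})$, I would invoke the same Perron--Frobenius simplicity, now for the matrix $-\whK$ (equivalently $\whK$'s Perron structure read with reversed sign) or directly for $U$: $U = |C_\d|^{-1}T_{\d\d} + |C_\d|^{-1}T_{\d\u}\Psi$ is an irreducible generator, so $-U$ has a simple eigenvalue of smallest real part, strictly below the real parts of the others, and that eigenvalue is $\lambda_{m_\u+1}$. That closes (\ref{e:separate}). The main obstacle I anticipate is not any single inequality but making the matching of spectra airtight: one must check that the $m_\u$ eigenvalues contributed by $K$ (via (\ref{e:whk})) are precisely $\lambda_1,\dots,\lambda_{m_\u}$ and not interleaved with $-U$'s, i.e.\ that the Perron root of $K$ sits strictly below every eigenvalue of $-U$ (except possibly a shared $0$ when $\mu=0$). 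This I would get by the sign/drift analysis: when $\mu \neq 0$ exactly one of the two candidate zeros is a genuine zero and the other is strictly on the correct side, forcing the gap $\lambda_{m_\u} \leq \lambda_{m_\u+1}$ to be a genuine separation of the two halves of the spectrum; when $\mu = 0$ both are $0$ and one argues the algebraic multiplicity of $0$ in $C^{-1}T$ is exactly $2$ (one from $K$, one from $-\whK$), which follows because $0$ is a simple root for each of the two irreducible generators separately.
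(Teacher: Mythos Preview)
The paper itself does not prove this theorem; it simply writes ``Details are given in \cite{glr11b}'' and records the conclusions in Table~\ref{t:eigenvalues}. So there is no in-paper argument to compare against, and your plan is already more than what the paper offers. That said, your plan contains two concrete problems.

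\textbf{The drift dichotomy for $K$ is reversed, and the eigenvector is wrong.} You claim $K\vone=\vzero$ when $\mu\leq 0$ and $K\vone<\vzero$ when $\mu>0$, deduced from $\Psi\vone=\vone$ and the row-sum structure of $T$. But compute directly from (\ref{e:K}): $K\vone_\u = C_\u^{-1}T_{\u\u}\vone_\u + \Psi|C_\d|^{-1}T_{\d\u}\vone_\u = -C_\u^{-1}T_{\u\d}\vone_\d + \Psi|C_\d|^{-1}T_{\d\u}\vone_\u$, which has no reason to vanish when $\Psi\vone_\d=\vone_\u$. The row-sum argument you have in mind works for $U$ and $\whU$, not for $K$ and $\whK$: from (\ref{e:u}), $U\vone_\d = |C_\d|^{-1}(T_{\d\d}\vone_\d + T_{\d\u}\Psi\vone_\d)$, which \emph{is} zero precisely when $\Psi\vone_\d=\vone_\u$, i.e.\ when $\mu\leq 0$; and dually $\whU\vone_\u=\vzero$ iff $\mu\geq 0$. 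For $K$ the correct relation is $K(\vone_\u-\Psi\vone_\d)=\vzero$ (combine (\ref{e:K}) with the Riccati equation (\ref{e:riccati})), which yields a positive null vector of $K$ only when $\mu>0$. Thus the Perron root of $K$ is $0$ iff $\mu\geq 0$, the opposite of what you wrote. If you carried your version through to the bulleted trichotomy you would get the cases $\mu<0$ and $\mu>0$ interchanged. (The statements about $K$'s spectrum quoted in Section~\ref{s:crossings} are in fact consequences of the present theorem, so appealing to them would also be circular.)

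\textbf{The $\mu=0$ multiplicity step is a genuine gap.} When $\mu=0$ both $\Psi$ and $\whpsi$ are stochastic, so the transformation matrices in (\ref{e:whu}) and (\ref{e:whk}) are singular (the vector $\vligne{\vone_\u \\ -\vone_\d}$, respectively $\vligne{\vone_\u \\ \vone_\d}$, lies in the kernel). Hence you cannot conclude that the spectrum of $C^{-1}T$ is the disjoint union (with multiplicity) of the spectra of $K$ and $-\whK$; the two invariant subspaces overlap in the $0$-eigenspace and together span only a hyperplane. Knowing that $0$ is simple in $K$ and simple in $\whK$ does \emph{not} by itself force algebraic multiplicity $2$ in $C^{-1}T$: one needs an additional argument, e.g.\ exhibiting a genuine Jordan vector for $C^{-1}T$ at $0$, or computing $\frac{d}{d\lambda}\det(C^{-1}T-\lambda I)$ at $\lambda=0$ and showing it vanishes exactly when $\valpha\tr\vcc=0$. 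The paper alludes to this difficulty just after (\ref{e:kNust}), noting that for $\mu=0$ ``one must develop the Jordan chain argument from the proof of \cite[Lemma~4.6]{glr11b}.''
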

Details are given in \cite{glr11b}.  The main properties are
summarised in Table~\ref{t:eigenvalues}.

\begin{table}[t]
\centering{
\begin{tabular}{r|rrr}
drift $\mu$ & $\,< 0$ & \quad 0 & $>0$ \\
\hline
$\Psi \vone$ & $\vone$ & $\vone$ & $< \vone$ \\
$\whpsi\vone$ & $< \vone$ & $\vone$ & $ \vone$ \\
 $U$, $\whK$ & 0 & 0 & $<0$ \\
$\whU$, $K$ & $<0$ & 0 & 0 \\
\end{tabular}
}
\caption{Crossing probabilities, and maximal eigenvalues of $U$, $K$,
  $\whU$, and $\whK$ in relation to the
  asymptotic drift $\mu$.}
\hrulefill
   \label{t:eigenvalues}
\end{table}

Actually, $K$ and $\whU$ are similar matrices, and $\whK$ is similar
to $U$.  This is easy to show if  the stationary drift is different
from 0 for, we find from (\ref{e:whu}, \ref{e:whk})
after some simple algebraic manipulation that
\begin{equation}
   \label{e:kNust}
K (I-\Psi \whpsi) = (I-\Psi \whpsi) \whU 
\quad \mbox{and} \quad
 \whK (I-\whpsi \Psi) = (I-\whpsi \Psi) U.
\end{equation}
If $\mu \not= 0$, then both $\Psi \whpsi$ and $\whpsi \Psi$ are
strictly sub-stochastic matrices, $I- \Psi \whpsi$ and
$I-\whpsi \Psi$ are non-singular,  and  we may write 
\begin{align*}
\whU   = (I-\Psi \whpsi)^{-1}  K (I-\Psi \whpsi) 
\\
\whK = (I-\whpsi \Psi) U (I-\whpsi \Psi)^{-1}
\end{align*}
which shows that $K$ is similar to $\whU$ and
$\whK$ is similar to $U$.

If $\mu=0$, however, $\Psi \whpsi \vone = \vone$ and $\whpsi \Psi
\vone = \vone$, 
$I - \Psi \whpsi$ and $I-\whpsi \Psi$ are both
singular, and the argument above fails.  Instead, one must
develop the Jordan chain argument from the proof of \cite[Lemma~4.6]{glr11b}.  

As a matter of fact, we often find that $\mu = 0$ is a case that
presents additional difficulties.  This will be seen in
Theorem~\ref{t:fullsolution} about escape probabilities --- that is
but one example. Moreover, the convergence of computational algorithms
is much slower (Guo~\cite{guo06}).

\section{First passage times}
\label{s:times}

The matrix $H$ analysed in Section~\ref{s:regenerative} gives us the
joint probability that a regeneration interval is finite, and the
phase at the end of the interval.  Here, we are interested in the {\em
  distribution} of the length of the regeneration interval, that is,
the distribution functions
\[
H_{ij}(t)  = \P[\theta_{n+1} - \theta_n \leq t, \bphi_{n+1} = j |
\bphi_n =i],
\]
for $i$, $j$ in $\Sd$.  Using the same partition that gave us
(\ref{e:H}), we write $H(t)$ as the convolution product $H(t) =
\Phi * \Psi(t)$, with 
\begin{align*}
\Phi_{ik}(t) & = \P[\delta_n - \theta_n \leq t, \varphi(\delta_n)=k | \bphi_n=i ], && i \in \Sd,
            k \in \Su, \\
\Psi_{kj}(t) & = \P[\theta_{n+1} - \delta_n \leq t, \bphi_{n+1} = j | \varphi(\delta_n)=k], && k \in
            \Su, j \in \Sd.
\end{align*}
As usual, it is easier to characterise the regenerative intervals
through their LS transforms
\begin{equation}
   \label{e:hs}
\whH_{ij}(s)  = \int_0^\infty e^{-st} \, \ud H_{ij}(t),
\end{equation}
and to write $\whH(s)= \whphi(s) \whpsi(s)$ where $\whphi(s)$ and $\whpsi(s)$
are the matrices of LS transforms of $\Phi(t)$ and $\Psi(t)$,
respectively.  It is well-known that LS transforms may be interpreted in
probabilistic terms through the introduction of an exponential random
variable $V$ with parameter $s$, independent of the fluid flow
process: we rewrite (\ref{e:hs}) as 
\[
\whH_{ij}(s) = \P[ \theta_{n+1} - \theta_n \leq V, \bphi_{n+1} =j|
\bphi_n=i ],
\]
and similarly
\begin{align*}
\whphi_{ik}(s) & = \P[\delta_n - \theta_n \leq V, \varphi(\delta_n)=k
                 | \bphi_n=i ], 
\\
\whpsi_{kj}(s) & = \P[\theta_{n+1} - \delta_n \leq V, \bphi_{n+1} = j | \varphi(\delta_n)=k].
\end{align*}
At this point, it is easy to verify that $\whphi(s)$ and
$\whpsi(s)$ are given by slight modifications of (\ref{e:Phi}) and
(\ref{e:riccati}):
\begin{equation}
   \label{e:phih}
\whphi(s) = \vligne{I & 0} 
  \left(-\vligne{Q_{\d\d} - sI & Q_{\d0} \\  Q_{0\d} & Q_{00}
      -sI}^{-1} \right)
  \vligne{Q_{\d\u} \\  Q_{0\u}},
\end{equation}
and $\whpsi(s)$ is the minimal nonnegative solution of 
\begin{equation}
   \label{e:psih}
C_\u^{-1} T_{\u\d}(s)  + C_\u^{-1} T_{\u\u}(s) \whpsi(s) + \whpsi(s) |C_\d|^{-1}
T_{\d\d}(s)  + \whpsi(s)  |C_\d|^{-1} T_{\d\u}(s)  \whpsi(s) =0,
\end{equation}
where
\[
T(s) = \vligne{Q_{\u\u}-sI & Q_{\u\d} \\ Q_{\d\u} & Q_{\d\d}-sI}
 + \vligne{Q_{\u0} \\ Q_{\d0}}   (sI- Q_{00} )^{-1}   \vligne{Q_{0\u}&  Q_{0\d}}.
\]
The Riccati equation
may be solved for any given $s$ by the same algorithms as discussed in
Section~\ref{s:algorithms}, and this makes it feasible to compute the
distributions themselves by numerical inversion procedures.

Moments of
first passage times are obtained by taking the derivatives of $\whH(s)
= \whphi(s) \whpsi(s)$ and evaluating it for $s=0$.  Derivatives of
$\whphi(s)$ are easily obtained from (\ref{e:phih}) but those of
$\whpsi(s)$ are more involved, as shown in Bean {\it et
  al.}~\cite{brt05}, and are expressed as solutions of nonsingular
Sylvester equations.  
The {\em first moment} may be obtained in a more straightforward manner,
as we show in (\ref{e:smallm}).


\section{Escape from an interval}
\label{s:escape}

Section~\ref{s:passage} is about the distribution of the phase
upon the first passage of $X(t)$ to a given level.  Here, we deal with
the first passage to the boundary of a finite interval: assuming that
$X(0)=0$, we look for the distribution of the phase when the
level escapes for the first time from the interval $(-a, b)$,
with $a$ and $b \geq 0$.

We define as follows the matrices  $A^{(a,b)}$ and $B^{(a,b)}$ indexed
by  $\Ss \times \Sd$ and $\Ss \times \Su$, respectively:
\begin{align*}
A_{ij}^{(a,b)} &= \P[\tau_a^\d < \tau_b^\u, \varphi(\tau_a^\d) =
j| X(0)=0, \varphi(0)=i],  \qquad \mbox{$i \in \Ss$, $j \in \Sd$,} 
\\
B_{ij}^{(a,b)} &= \P[\tau_b^\u < \tau_a^\d, \varphi(\tau_b^\u) =
j| X(0)=0, \varphi(0)=i],  \qquad \mbox{$i \in \Ss$, $j \in \Su$.}
\end{align*}
The matrices are partitioned into the usual subblocks:
\[
\vligne{ B^{(a,b)}  & A^{(a,b)}} =
\vligne{B_{\u\u}^{(a,b)} & A_{\u\d}^{(a,b)}  \\ B_{\d\u}^{(a,b)} &
  A_{\d\d}^{(a,b)}  \\ B_{0\u}^{(a,b)} & A_{0\d}^{(a,b)}}.
\]
If $\varphi(0)$ is in $\Sz$, the process remains at level 0 for a
while, before jumping to a phase in either $\Su$ or $\Sd$.
We condition on the first phase visited
either in $\Su$ or $\Sd$  and we find that
\begin{equation}
   \label{e:codo}
\vligne{B_{0\u}^{(a,b)} & A_{0\d}^{(a,b)}} = (-Q_{00})^{-1}
\vligne{Q_{0\u} & Q_{0\d}}
\vligne{B_{\u\u}^{(a,b)} & A_{\u\d}^{(a,b)}  \\ B_{\d\u}^{(a,b)} & A_{\d\d}^{(a,b)}  }.
\end{equation}

\begin{rem} \em
   \label{r:ineq}
We see here why we defined $\tau_x^\d$ in Section~\ref{s:passage} as
$\tau^\d_x = \inf\{t : X(t) < X(0)-x\}$.  If we had defined it as
$\tau^\d_x = \inf\{t : X(t) = X(0)-x\}$, then (\ref{e:codo}) would not
have held  for $a=0$.  Similarly, it would
not have held for $b=0$ if we had defined
$\tau^\u_x = \inf\{t : X(t) \geq X(0)+x\}$.
\end{rem}

We need the following lemma to determine the remaining entries of 
 $B^{(a,b)}$  and    $A^{(a,b)}$.

\begin{lem}
   \label{t:system}
   For $\varphi(0)$ in $\Su$ or $\Sd$, the escape probability matrices
   are solutions of the linear system
\begin{equation}
   \label{e:system}
\vligne{B_{\u\u}^{(a,b)} & A_{\u\d}^{(a,b)}  \\ B_{\d\u}^{(a,b)} &
  A_{\d\d}^{(a,b)}}
(I + \PP)
=
\UU,
\end{equation}
where 
\begin{align*}
\PP & = \vligne{0 & \Psi e^{U(a+b)} \\ \whpsi e^{\whU(a+b)} & 0},
\\[0.5\baselineskip]
\UU & = \vligne{e^{\whU b}   & \Psi e^{U a}  \\ 
         \whpsi e^{\whU b}  &  e^{Ua}}.
\end{align*}
\end{lem}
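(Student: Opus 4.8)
The plan is to derive the system \eqref{e:system} by a first-passage / censoring argument that tracks the phase process at the two moments when the level first reaches $b$ from above (an up-crossing) or $-a$ from below (a down-crossing), and then iterates. Fix $a,b\ge 0$ and suppose $\varphi(0)=i$ with $i\in\Su$. Before the level can escape downward through $-a$ it must first reach level $b$ (in which case we are done, having escaped upward) or else return to level $0$; more precisely, starting in a phase of $\Su$ at level $0$, the level rises, and either it hits $b$ first — contributing the term $e^{\whU b}$ to the $B_{\u\u}$ block — or it eventually comes back down to level $0$, for the first time, in some phase of $\Sd$, which is exactly the event governed by $\Psi$. Symmetrically, for $i\in\Sd$ the level first moves down, and either it reaches $-a$ directly, contributing $e^{Ua}$ to the $A_{\d\d}$ block, or it returns to level $0$ in a phase of $\Su$, governed by $\whpsi$. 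This is where the strict inequalities in the definitions of $\tau^\u_x,\tau^\d_x$ matter, just as in Remark~\ref{r:ineq}: a return to level $0$ is a genuine crossing event, not the escape event.

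The key step is then to set up the renewal structure across successive returns to level $0$. Starting from $\Su$ at level $0$: with "probability" $e^{\whU b}$ (a sub-stochastic matrix on $\Su$) we escape through $b$; otherwise we come back to $0$ in a phase of $\Sd$ according to $\Psi$, and from there we are in the same situation as a process started in $\Sd$ at level $0$, EXCEPT that the relevant target levels are now still $b$ above and $-a$ below. Likewise from $\Sd$ at level $0$: escape through $-a$ with matrix $e^{Ua}$, or return to $0$ in $\Su$ with $\whpsi$, landing back in the $\Su$-at-level-$0$ situation. Writing $X = \vligne{B_{\u\u}^{(a,b)} & A_{\u\d}^{(a,b)} \\ B_{\d\u}^{(a,b)} & A_{\d\d}^{(a,b)}}$, this one-step decomposition reads
\[
X = \UU \;-\; X\,\PP ,
\]
i.e. $X(I+\PP)=\UU$, once one checks that the composite "return to $0$ and then escape" operator is exactly $X\PP$. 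Here the off-diagonal block $\Psi e^{U(a+b)}$ arises because a process that starts in $\Su$ at level $0$, returns to $0$ in $\Sd$ (factor $\Psi$), and then from that $\Sd$-start must escape through $-a$: but to couple this with the second row of $X$, whose $A_{\d\d}$ entry is the probability of reaching $-a$ before $+b$ \emph{starting from level $0$}, one must shift the down-target from $-a$ to $-a$ while the up-target, as seen from the new start, is unchanged; tracking the bookkeeping of how "escape through $b$ before $-a$, started at $0$ after a $\Psi$-excursion" translates into "escape through $0$ before $-(a+b)$", one obtains the factor $e^{U(a+b)}$. The analogous shift on the other side gives $\whpsi e^{\whU(a+b)}$.

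The main obstacle is precisely this shift-of-target bookkeeping that produces the exponent $a+b$ rather than $a$ or $b$ in $\PP$: one must argue carefully, using the level-homogeneity of $\{X(t),\varphi(t)\}$ emphasised at the start of Section~\ref{s:passage} (only level differences matter), that after a return to level $0$ the remaining escape problem is a translate of the original one, and that composing "first return to $0$" with "subsequent escape from the shifted interval" cleanly factors as a matrix product with the stated $\PP$. The cleanest way to make this rigorous is to condition on the phase at the first return to level $0$ and invoke the strong Markov property together with the identifications $(e^{Ux})_{ij}=\P[\tau^\d_x<\infty,\gamma^\d(x)=j\mid\gamma^\d(0)=i]$ and its $\whU$-counterpart; linearity in the initial phase then yields the matrix identity. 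Once $X$ is found from \eqref{e:system} (which is solvable since $I+\PP$ is nonsingular when $\mu\ne 0$, and by a limiting/group-inverse argument when $\mu=0$), equation \eqref{e:codo} supplies the remaining rows indexed by $\Sz$, completing the description of $A^{(a,b)}$ and $B^{(a,b)}$.
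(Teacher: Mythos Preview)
Your matrix identity $X(I+\PP)=\UU$ is the statement to be proved, and you have it right; but the probabilistic story you tell to justify it does not produce that identity.

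The core error is the sentence ``with `probability' $e^{\whU b}$ \ldots\ we escape through $b$; otherwise we come back to $0$ in a phase of $\Sd$ according to $\Psi$''. The matrix $e^{\whU b}$ is the \emph{unrestricted} first-passage probability to level $b$: it allows the process to visit $-a$, return to $0$ any number of times, etc., before finally reaching $b$. It is not the probability of hitting $b$ before returning to $0$, and $\Psi$ is not the complementary event. So your two branches are not a partition, and a renewal at level $0$ built from these pieces cannot be correct. If you actually carried out a renewal-at-$0$ argument with the right factors (``hit $b$ before returning to $0$'' versus ``return to $0$ first''), you would obtain a different system, involving $B^{(0,b)}$-type quantities rather than $e^{\whU b}$ and $\Psi$.

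The paper's argument goes in the opposite direction: it decomposes the \emph{unrestricted} first passage, not the escape probability. Starting in $\Su$ at level $0$, the event ``eventually reach level $b$'' (probability matrix $e^{\whU b}$) splits according to which side of $(-a,b)$ is exited first:
\[
e^{\whU b} \;=\; B_{\u\u}^{(a,b)} \;+\; A_{\u\d}^{(a,b)}\,\whpsi\, e^{\whU(a+b)}.
\]
The second term reads: first exit through $-a$ (in a phase of $\Sd$), then from $(-a,\Sd)$ return to level $-a$ from below in a phase of $\Su$ (factor $\whpsi$), then climb the full distance $a+b$ to reach $b$ (factor $e^{\whU(a+b)}$). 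This makes the exponent $a+b$ immediate and requires no ``shift-of-target bookkeeping''. The companion equation $\Psi e^{Ua}=A_{\u\d}^{(a,b)}+B_{\u\u}^{(a,b)}\Psi e^{U(a+b)}$ is obtained the same way, decomposing the unrestricted passage from $(0,\Su)$ down to level $-a$. The second row is symmetric. In matrix form this is exactly $\UU = X(I+\PP)$, but read as a decomposition of $\UU$, not of $X$.

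So the fix is short: condition the unrestricted first-passage matrices $e^{\whU b}$, $\Psi e^{Ua}$, $\whpsi e^{\whU b}$, $e^{Ua}$ on which boundary is hit first, and identify the continuation after hitting the ``wrong'' boundary. Your level-homogeneity and strong-Markov ingredients are the right tools; they just need to be applied at the boundary levels $-a$ and $b$, not at $0$.
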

\begin{proof}
Assume $\varphi(0)$ is in $\Su$.  We have
\begin{equation}
   \label{e:ustb}
e^{\whU b} = B_{\u\u}^{(a,b)} + A_{\u\d}^{(a,b)} \whpsi e^{\whU(a+b)}.
\end{equation}
Indeed, the left-hand side gives the distribution of the phase when
the process has moved up from level 0 to level $b$, it is decomposed
in the right-hand side as the sum of the probability that the process reaches $b$
without going down to $-a$ and the probability that it goes down to
$-a$ first, then returns to level $-a$ from below, and eventually goes
up by $a+b$ units, from $-a$ to $b$. 
Similarly,
\begin{equation}
   \label{e:ua}
\Psi e^{U a} = A_{\u\d}^{(a,b)}  +  B_{\u\u}^{(a,b)} \Psi e^{U(a+b)}.
\end{equation}
Equations (\ref{e:ustb}, \ref{e:ua})  form  the first row of the system (\ref{e:system}).  The
argument for the second row is similar.
\end{proof}

If $\mu \not= 0$, then (\ref{e:system}) is nonsingular and it has a
unique solution.  The reason is that either $\Psi e^{U(a+b)}$ or
$ \whpsi e^{\whU (a+b)}$ is substochastic, and so the series
$\sum_{\nu \geq 0} (-1)^\nu \PP^\nu$ is converging to $(I+ \PP)^{-1}$
(see da Silva Soares and Latouche~\cite{dssl03} for details).  If
$\mu=0$, then both $\Psi e^{U(a+b)}$ and $ \whpsi e^{\whU (a+b)}$ are
stochastic matrices, $I+\PP$ is singular and we need to add one
equation.

Equation (\ref{e:supplement}) is one such choice, as we prove in
Theorem \ref{t:fullsolution}.  This equation is identical to the
one given for Markov-modulated {\em Brownian motion} in
Ivanovs~\cite{ivano10} in a comment after Theorem 3.1, referring to a
result obtained in D'Auria {\it et al.}~\cite[Section 7]{aikm10b} by a spectral
decomposition argument.  The proof given here is based on the analysis
of the stochastic process itself, it is new and for that reason we
give all technical details.

\begin{lem}
   \label{t:null}
If $\mu= 0$, then the escape probability matrix is such that
\begin{equation}
   \label{e:supplement}
\vligne{B_{\u\u}^{(a,b)} & A_{\u\d}^{(a,b)}  \\ B_{\d\u}^{(a,b)} &
  A_{\d\d}^{(a,b)}}
\vbeta
= 
\vligne{\vect h_\u \\ \vect h_\d}, 
\end{equation}
where $\vect h = -Q^\# \vcc$ and 
\begin{equation}
   \label{e:c}
\vbeta = \vligne{\ \ b \vone_\u + \vect h_\u  \\ -a \vone_\d + \vect h_\d}.
\end{equation}
\end{lem}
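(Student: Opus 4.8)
The plan is to exploit the fact that $h(t) := -Q^\# \vcc$ makes the process $\{X(t) + h_{\varphi(t)}(t)\}$ — more precisely $X(t) + h_{\varphi(t)}$ where $h_j = (\vect h)_j$ — into a martingale, and then apply optional stopping at the exit time $\tau = \tau_a^\d \wedge \tau_b^\u$. Recall the well-known fact that for a Markov-modulated fluid with drift $\mu = \valpha\tr\vcc = 0$, the function $\vect h = -Q^\# \vcc$ (finite because the group inverse exists) satisfies $Q\vect h = -\vcc + (\valpha\tr\vcc)\vone = -\vcc$, so that $\ud\,\E[X(t) + h_{\varphi(t)} \mid \FF_0] = (c_{\varphi(t)} + (Q\vect h)_{\varphi(t)})\,\ud t = 0$; hence $M(t) := X(t) + h_{\varphi(t)}$ is a martingale with respect to the natural filtration.

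First I would check that the stopping time $\tau = \tau_a^\d \wedge \tau_b^\u$ has finite expectation when $\mu = 0$, so that optional stopping is legitimate: since the level process on $[0,\tau]$ is confined to $(-a,b)$ and the phase process mixes at a fixed geometric-like rate, $\E[\tau \mid \varphi(0) = i] < \infty$ (this also follows from the fact, used earlier in the paper, that regeneration intervals have finite mean length in the recurrent regime, together with a standard comparison; alternatively bound $\tau$ by the first passage time to $b$ of the reflected process). Given integrability, $M(t\wedge\tau)$ is uniformly integrable (it is bounded: $|X(t\wedge\tau)| \le \max(a,b)$ and $\vect h$ is a fixed vector), so $\E[M(\tau)\mid\varphi(0)=i] = M(0) = h_i$ for each $i \in \Su\cup\Sd$.

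Next I would decompose $\E[M(\tau)\mid\varphi(0)=i]$ according to which boundary is hit first. If $i \in \Su$: with probability $(B_{\u\u}^{(a,b)})_{ik}$ the process exits through level $b$ in phase $k \in \Su$, contributing $b + h_k$; with probability $(A_{\u\d}^{(a,b)})_{ik}$ it exits through level $-a$ in phase $k \in \Sd$, contributing $-a + h_k$. Summing,
\[
h_i = \sum_{k\in\Su} (B_{\u\u}^{(a,b)})_{ik}(b + h_k) + \sum_{k\in\Sd}(A_{\u\d}^{(a,b)})_{ik}(-a + h_k),
\]
which is exactly the $\Su$-row of $\vligne{B_{\u\u}^{(a,b)} & A_{\u\d}^{(a,b)}}\vbeta = \vect h_\u$ with $\vbeta$ as in (\ref{e:c}). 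The analogous computation starting from $i\in\Sd$ gives the $\Sd$-row, completing (\ref{e:supplement}). I would also note the slight subtlety that $X(\tau_a^\d) = -a$ and $X(\tau_b^\u) = b$ exactly, by continuity of the level trajectory, so the strict inequalities in the definitions of $\tau_a^\d$, $\tau_b^\u$ cause no discrepancy here.

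The main obstacle I anticipate is the justification of optional stopping at $\tau$ in the $\mu = 0$ case — establishing $\E[\tau] < \infty$ (or, failing that, arranging a truncation/limiting argument with $\tau\wedge n$ and dominated convergence, using that $M(t\wedge\tau)$ stays in a fixed compact set while $\P[\tau > n]\to 0$). Once integrability of $\tau$ is in hand — and the paper elsewhere already exploits finiteness of mean cycle lengths in the recurrent regime — the rest is the routine martingale decomposition above. A secondary point worth a line is confirming that $\vect h = -Q^\# \vcc$ is the \emph{partitioned} vector $\vligne{\vect h_\u \\ \vect h_\d \\ \vect h_0}$ and that only the $\Su,\Sd$ components enter (\ref{e:supplement}), the $\Sz$ component being irrelevant because $\varphi(0) \in \Su\cup\Sd$ and $c_j = 0$ for $j \in \Sz$ already makes those coordinates consistent.
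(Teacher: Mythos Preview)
Your martingale argument is correct and is a genuinely different packaging of the same underlying idea. The paper does not introduce $M(t)=X(t)+h_{\varphi(t)}$ explicitly; instead it first shows, via the deviation-matrix identity $\int_0^\infty(e^{Qu}-\vone\valpha\tr)\,\ud u=-Q^\#$, that $h_i=\lim_{t\to\infty}\E[X(t)\mid X(0)=0,\varphi(0)=i]$ (the limit of expectations exists even though $X(t)$ itself is null-recurrent), and then conditions this limiting expectation on which boundary is hit first, invoking the strong Markov property and spatial homogeneity to obtain $\vh=B^{(a,b)}(b\vone_\u+\vh_\u)+A^{(a,b)}(-a\vone_\d+\vh_\d)$.

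Your route is arguably cleaner on the analytic side: once you observe $Q\vh=-\vcc$ (which is exactly what the paper's integral manipulation encodes), the martingale property of $M$ is immediate, and optional stopping with the \emph{bounded} stopped process $M(\cdot\wedge\tau)$ plus $\P[\tau<\infty]=1$ (a consequence of $\limsup X=+\infty$, $\liminf X=-\infty$ when $\mu=0$, already noted in the preliminaries) gives the result without any limit interchange to justify. The paper's step ``by conditioning \ldots it is straightforward to verify'' is precisely where that interchange is hidden. Conversely, the paper's approach yields as a by-product the interpretation $\vh=\E_0[X_\infty\mid\varphi(0)]$, which it reuses in Remark~\ref{r:tbd} and in the proof of Theorem~\ref{t:fullsolution}; your martingale formulation gives the same identities (e.g.\ $\vh_\u=\Psi\vh_\d$ follows by stopping $M$ at $\Theta$), but that connection would need to be spelled out separately. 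Your concern about $\E[\tau]<\infty$ is unnecessary here: boundedness of $M(t\wedge\tau)$ already makes the stopped martingale uniformly integrable, so dominated convergence suffices.
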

\begin{proof}
We know from Coolen-Schrijner and van Doorn~\cite[Section 3]{cv02}
  that the deviation matrix of $Q$ is equal to the group inverse of
  $-Q$, that is,  
$ \int_0^\infty (e^{Qu} - \vone \cdot \vect\alpha\tr) \, \ud u = -
Q^\# $.  
Therefore,
\begin{align}
   \nonumber
 \vect h & = \int_0^\infty (e^{Qu} - \vone \cdot \vect\alpha\tr) \,
  \ud
  u \ \vcc \\
   \nonumber
  & = \lim_{t \rightarrow \infty } \int_0^t (e^{Qu} - \vone \cdot
 \vect \alpha\tr) \, \ud   u \ \vcc \\
   \nonumber
 & =  \lim_{t \rightarrow \infty} \int_0^t (e^{Qu} \vcc
  - \vone \cdot \vect\alpha\tr \vcc) \, \ud u  \\
   \label{e:local}
  & = \lim_{t \rightarrow \infty} \int_0^t e^{Qu} \vcc \, \ud u
\end{align}
as $\vect\alpha\tr \vcc = \mu$ and $\mu = 0$ by assumption.
Furthermore, $(e^{Qu})_{ij}$ is the probability   $\P[\varphi(u)=j | \varphi(0)=i ]$ and so
(\ref{e:local}) may be interpreted by (\ref{e:x})  as 
\[
h_i = \lim_{t \rightarrow
  \infty} \E[X_t|X(0)=0, \varphi(0)=i],
\]
and we write $\vect h = \E_0[X_\infty|\varphi(0)]$ for short.
By conditioning on the first time the process escapes from $(-a, b)$, it is straightforward to verify that
\begin{align*}
\vect h & = \E_0[X_\infty \mathds{1}\{\tau_b^\u <
\tau_a^\d\}|\varphi(0)]  +  \E_0[X_\infty \mathds{1}\{\tau_a^\d <
\tau_b^\u\}|\varphi(0)] \\
 & = B^{(a,b)} (b \vone_\u + \vect h_\u) + A^{(a,b)}(-a \vone_\d + \vect h_\d)
\end{align*}
and this concludes the proof.
\end{proof}

\begin{rem}   \em
   \label{r:tbd}
The vector $\vh$ has a number of interesting properties that we need
later.  We observe that $\valpha\tr \vh =  0$ since $\valpha\tr
Q^\#=\vzero$.  Furthermore,
\begin{align}
   \label{e:hp}
\vh_\u & = \Psi \vh_\d  \quad \mbox{if\ } \mu \leq 0,
\\
   \label{e:hm}
\vh_\d & = \Psi \vh_\u \quad \mbox{if\ }  \mu \geq 0.
\end{align}
To justify the first equation, we use the interpretation given to
$\vh$, and we use the fact that if $\mu \leq 0$, starting from a phase
in $\Su$, the process returns to 0 in finite time with probability 1.
The justification of (\ref{e:hm}) is similar.
\end{rem}

In summary, the distribution of the phase upon escaping from the
interval $(-a,b)$ is given in the next theorem.

\begin{thm}
   \label{t:fullsolution}
The distribution of the phase at  first escape from $(-a,b)$ is given
by 
\[
\vligne{B_{0\u}^{(a,b)} & A_{0\d}^{(a,b)}} = (-Q_{00})^{-1}
\vligne{Q_{0\u} & Q_{0\d}}
\vligne{B_{\u\u}^{(a,b)} & A_{\u\d}^{(a,b)}  \\ B_{\d\u}^{(a,b)} & A_{\d\d}^{(a,b)}  }
\]
if $\varphi(0) \in \Sz$.
If $\mu \not= 0$ and $\varphi(0)$ is in $\Su$ or $\Sd$, then 
\begin{equation}
   \label{e:solutiona}
\vligne{B_{\u\u}^{(a,b)} & A_{\u\d}^{(a,b)}  \\ B_{\d\u}^{(a,b)} &
  A_{\d\d}^{(a,b)}}
=  \UU (I+\PP)^{-1} ,
\end{equation}
where $\UU$ and $\PP$ are defined in Lemma~\ref{t:system}.

If $\mu=0$ and $\varphi(0)$ is in $\Su$ or $\Sd$, then 
\begin{equation}
   \label{e:solution}
\vligne{B_{\u\u}^{(a,b)} & A_{\u\d}^{(a,b)}  \\ B_{\d\u}^{(a,b)} &
  A_{\d\d}^{(a,b)}}
 = \UU (I+ \PP)^\# + \vw \cdot \veta\tr,
\end{equation}
where $\veta$ is the left eigenvector of $I+\PP$ for the eigenvalue 0,
\[
\vw = (\vect \eta\tr  \vbeta)^{-1}
(\vh - \UU (I+\PP)^\# \vbeta),
\]
and $\vbeta$ is defined in (\ref{e:c}).
\end{thm}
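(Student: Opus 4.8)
The plan is to assemble the theorem from the three ingredients already in place: the conditioning identity \eqref{e:codo} handles the $\Sz$-rows, Lemma~\ref{t:system} gives the linear system \eqref{e:system}, and for $\mu\neq0$ the invertibility of $I+\PP$ (argued right after Lemma~\ref{t:system}) immediately yields \eqref{e:solutiona}. So the only real content is the $\mu=0$ case, where $I+\PP$ is singular and one must show that \eqref{e:system} together with the supplementary relation \eqref{e:supplement} of Lemma~\ref{t:null} pins down the solution uniquely, and that the claimed formula \eqref{e:solution} is that solution.

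First I would establish that, when $\mu=0$, the eigenvalue $0$ of $I+\PP$ is simple, so that the group inverse $(I+\PP)^\#$ is well defined and $\veta$ (the left null vector) is unique up to scaling. Since $\mu=0$ both $\Psi e^{U(a+b)}$ and $\whpsi e^{\whU(a+b)}$ are stochastic (Table~\ref{t:eigenvalues}), hence $\PP\vone=\vone$ and $-1$ is an eigenvalue of $\PP$; by irreducibility of the underlying chain the associated eigenprojection is rank one, giving simplicity. Then the general solution of \eqref{e:system} is a particular solution plus an arbitrary multiple of $\veta\tr$ on the left, i.e.\ of the form $X_0 + \vw\cdot\veta\tr$; taking $X_0=\UU(I+\PP)^\#$ is legitimate because $\UU$ lies in the range of $I+\PP$ — this needs the check that $\UU\veta=\vzero$ (equivalently $\veta$ annihilates the columns of $\UU$), which I would verify from the explicit forms of $\UU$ and $\PP$ using $\Psi\whpsi\vone=\vone$, $\whpsi\Psi\vone=\vone$, and the relations $\Psi\vh_\d=\vh_\u$, $\whpsi\vh_\u=\vh_\d$ from Remark~\ref{r:tbd}.

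Next I would impose the supplementary equation. Substituting $X=\UU(I+\PP)^\#+\vw\cdot\veta\tr$ into \eqref{e:supplement} gives $\UU(I+\PP)^\#\vbeta + \vw\,(\veta\tr\vbeta) = \vh$, and solving for $\vw$ produces exactly the stated formula, provided $\veta\tr\vbeta\neq0$. This non-degeneracy is the crux: one must show the left null vector of $I+\PP$ is not orthogonal to $\vbeta=\bigl[\,b\vone_\u+\vh_\u;\ -a\vone_\d+\vh_\d\,\bigr]$. I expect to obtain $\veta$ fairly explicitly — it should be expressible through the stationary vectors of the stochastic matrices $\Psi e^{U(a+b)}$ and $\whpsi e^{\whU(a+b)}$ — and then $\veta\tr\vbeta$ should reduce, after using $\valpha\tr\vcc=0$ and $\valpha\tr\vh=0$, to something manifestly positive, morally a weighted version of the interval length $a+b$ (strictly positive unless $a=b=0$). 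Conversely I would argue that any escape-probability matrix must satisfy both \eqref{e:system} and \eqref{e:supplement} (the former is Lemma~\ref{t:system}, the latter is Lemma~\ref{t:null}), so a unique solution of the enlarged system is \emph{the} answer.

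The main obstacle is the $\mu=0$ analysis: proving simplicity of the $0$-eigenvalue of $I+\PP$, identifying $\veta$ concretely enough to evaluate $\veta\tr\vbeta$, and confirming $\veta\tr\vbeta\neq0$ so that the enlarged linear system is genuinely nonsingular. Everything else — the $\Sz$ block, and the $\mu\neq0$ formula — is a direct read-off from \eqref{e:codo} and Lemma~\ref{t:system}.
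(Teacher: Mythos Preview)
Your overall architecture matches the paper's: the $\Sz$-block and the $\mu\neq0$ case are immediate, and for $\mu=0$ one shows the zero eigenvalue of $I+\PP$ is simple, writes the general solution of \eqref{e:system} as $\UU(I+\PP)^\#+\vw\,\veta\tr$, and fixes $\vw$ via \eqref{e:supplement}. (One small correction: for $\UU(I+\PP)^\#$ to be a particular solution you need $\UU$ to annihilate the \emph{right} null vector of $I+\PP$, which is $\vxi=\bigl[\vone_\u;-\vone_\d\bigr]$, not the left null vector $\veta$. This follows at once from stochasticity of the blocks of $\UU$, or simply from the consistency of \eqref{e:system}; the $\vh$-identities are not needed there. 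For simplicity of the zero eigenvalue the paper uses the neat similarity $I+\PP=\diag(I,-I)(I-\PP)\diag(I,-I)$, reducing to Perron--Frobenius for the stochastic irreducible $\PP$.)

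The real gap is your plan for $\veta\tr\vbeta\neq0$. Writing $\veta_\d\tr=-\veta_\u\tr\Psi e^{U(a+b)}$ one gets
\[
\veta\tr\vbeta=(a+b)\,\veta_\u\tr\vone_\u+\veta_\u\tr\bigl(\vh_\u-\Psi e^{U(a+b)}\vh_\d\bigr),
\]
and the second term has no obvious sign. The identities $\valpha\tr\vcc=0$ and $\valpha\tr\vh=0$ do not help --- $\veta$ is unrelated to $\valpha$. Nor can you lean on $\whpsi\vh_\u=\vh_\d$ from Remark~\ref{r:tbd}: the paper's own proof shows that when $\mu=0$ this fails, indeed $\vh_\d\leq\whpsi\vh_\u$ with strict inequality in at least one coordinate (the heuristic behind \eqref{e:hm} breaks down in the null-recurrent case because the excursion below zero contributes a non-vanishing negative term). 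The paper instead establishes, via the interpretation $h_i=\lim_t\E_0[X(t)\mid\varphi(0)=i]$ and conditioning on the first passage times $\tau_0^\u$ and $\tau_{a+b}^\u$, the two \emph{inequalities}
\[
\vh_\d\leq\whpsi\vh_\u\quad(\text{strict somewhere}),\qquad
\vh_\u\leq e^{\whU(a+b)}\vh_\u+(a+b)\vone_\u,
\]
and then chains them with $\veta_\u\tr=\veta_\u\tr\Psi e^{U(a+b)}\whpsi e^{\whU(a+b)}$ to force $\veta\tr\vbeta>0$ strictly. That probabilistic inequality argument --- not algebra in $\valpha$ --- is the missing ingredient in your outline.
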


\begin{proof}
The first two statements have been
justified before, we include them for completeness.
If $\mu = 0$, the matrix $\PP$ is stochastic and
irreducible.  As
\[
I -\PP =
\vligne{I & -\Psi e^{U(a+b)} \\ -\whpsi e^{\whU(a+b)} & I}
 =
\vligne{I & 0 \\ 0 & -I}  (I+\PP) \vligne{I & 0 \\ 0 & -I},
\]
the matrices $I-\PP$ and $I+\PP$ are similar and $I+\PP$ has a unique
eigenvalue equal to 0.  The corresponding left eigenvector $\veta$ is such
that 
\begin{align}
  \label{e:etam}
\vect\eta\tr_\d & = -\vect\eta\tr_\u \Psi e^{U(a+b)},
\\
  \nonumber
\vect\eta\tr_\u   & = \vect\eta\tr_\u \Psi e^{U(a+b)} \whpsi e^{\whU(a+b)},
\end{align}
and we may choose $\vect\eta\tr_\u > \vzero$.
Thus, the system (\ref{e:system}) has the solution
\begin{equation}
   \label{e:sysba}
\vligne{B_{\u\u}^{(a,b)} & A_{\u\d}^{(a,b)}  \\ B_{\d\u}^{(a,b)} &
  A_{\d\d}^{(a,b)}} = \UU (I+\PP)^\# + \vw \cdot\veta\tr
\end{equation}
for some vector $\vw$.  
We post-multiply (\ref{e:sysba}) by $\vbeta$,  the left-hand
side is equal to $\vect h$ by Lemma \ref{t:null} and we obtain
\[
\vw = (\vect\eta\tr  \vbeta)^{-1} (\vect h - \UU (I+\PP)^\#  \vbeta),
\]
provided that $\vect\eta\tr  \vbeta \not = 0$.   This is equivalent to
showing that $\vbeta$ is, indeed, linearly
independent of the columns of $I+\PP$, which in turn implies that
(\ref{e:system}, \ref{e:supplement}) is a non-singular system when
$\mu = 0$.

Now,
\begin{align}
   \nonumber
\vect\eta\tr \vbeta& = b \vect\eta\tr_\u \vone_\u + \vect\eta\tr_\u \vect h_\u -
a \vect\eta\tr_\d \vone_\d + \vect\eta\tr_\d \vect h_\d \\
   \label{e:tha}
 & =  b \vect\eta\tr_\u \vone_\u +
a \vect\eta\tr_\u \vone_\u  +  \vect\eta\tr_\u (\vect h_\u - \Psi e^{U(a+b)}
\vect h_\d) 
\end{align}
by (\ref{e:etam}).   The vector $\vect h_\d$ is indexed by phases in
$\Sd$ and we write, for short,
\begin{align*}
\vect h_\d & = \lim_{t \rightarrow \infty} \E_0[X(t)| \varphi(0) \in
\Sd] \\
& = \lim_{t \rightarrow \infty} (\E_0[X(t) \indic\{\tau_0^\u < t \}| \varphi(0) \in \Sd]
+ \E_0[X(t) \indic\{\tau_0^\u \geq t \}| \varphi(0) \in \Sd]  )\\
& = \lim_{t \rightarrow \infty} (\P[\tau_0^\u < t, \varphi(\tau_0^\u)|\varphi(0) \in
\Sd] \, \E_0[X(t) |\tau_0^\u < t , X(\tau_0^\u), \varphi(\tau_0^\u)] \\
& \quad + \P[\tau_0^\u \geq t|\varphi(0) \in \Sd] \, \E_0[X(t) |\tau_0^\u
\geq t , \varphi(0) \in \Sd]) \\
& = \whpsi \vect h_\u+ \lim_{t \rightarrow \infty} \P[\tau_0^\u \geq
  t|\varphi(0) \in \Sd] \, \E_0[X(t) |\tau_0^\u
\geq t , \varphi(0) \in \Sd]) 
\intertext{as $X(\tau_0^\u)=0$ and $\varphi(\tau_0^\u) \in \Su$,}
& \leq \whpsi \vect h_\u
\end{align*}
since $X(t) < 0$ for $t < \tau_0^\u$.   The inequality is strict  for
at least one component of $\vh_\d$.
Otherwise, by Remark~\ref{r:tbd}, 
 we would get $\vect h_\d= \whpsi \vect h_\u = \whpsi
\Psi \vect h_\d$, from which we would successively conclude that $\vect
h_\d = c \vone$ for some scalar $c$, that $\vect h_\u = c \vone$, and that
$\vect h=c \vone$, which would be in contradiction with $\valpha\tr \vect
h = 0$, by Remark \ref{r:tbd}.   Consequently, (\ref{e:tha}) becomes
\begin{equation}
   \label{e:thb}
\vect\eta\tr \vbeta  >
(a +b) \vect\eta\tr_\u \vone_\u +
 \vect\eta\tr_\u (I - \Psi e^{U(a+b)}
\whpsi) \vect h_\u.
\end{equation}
By an argument similar to the one held above, we  find that 
\begin{align*}
\vect h_\u & = \lim_{t \rightarrow \infty} \E_0[X(t)-(a+b)| \varphi(0) \in
\Su] + (a+b)\vone\\
& = \lim_{t \rightarrow \infty} (\P_0[\tau_{a+b}^\u < t, \varphi(\tau_{a+b}^\u)|\varphi(0) \in
\Su] \E_0[X(t) -(a+b)|\tau_{a+b}^\u < t , \varphi(\tau_{a+b}^\u)] \\
 & \quad +(a+b) \vone\\
& \quad + \P_0[\tau_{a+b}^\u \geq t|\varphi(0) \in \Su] \E_0[(X(t)-(a+b)|\tau_{a+b}^\u
\geq t , \varphi(0) \in \Sd]) \\
& \leq e^{\whU(a+b)} \vect h_\u + (a+b) \vone
\end{align*}
as $X(\tau_{a+b}^\u) -(a+b)=0$, $\varphi(\tau_{a+b}^\u)
  \in \Su$, and $X(t) -(a+b) < 0$ for $t < \tau_{a+b}^\u$.
With this, (\ref{e:thb}) becomes
\begin{align*}
\vect\eta\tr \vbeta  & >
(a+b) \vect\eta\tr_\u \vone_\u +
 \vect\eta\tr_\u (I - \Psi e^{U(a+b)}
\whpsi e^{\whU (a+b)}) \vect h_\u - (a+b) \vect\eta\tr_\u\vone_\u \\
 & = 0.
\end{align*}
This completes the proof.
\end{proof}

In the same manner as in Section \ref{s:times}, we may determine the LS transform of
the random variable $\min(\tau_a^\d, \tau_b^\u)$, details are in Bean
{\it et al.}~\cite{bot09}.

\section{Numerical procedures}
\label{s:algorithms}

It should be clear by this point that the numerical evaluation of many
quantities of interest is dependent on being able to compute the
matrices $\Psi$ and $\whpsi$.  If we replace $U$ in (\ref{e:psia}) by
the right-hand side of (\ref{e:u}) and write
\[
\Psi = \int_0^\infty   e^{C_\u^{-1} T_{\u\u} y} \, C_\u^{-1} T_{\u\d} \,
e^{(|C_\d|^{-1} T_{\d\d}  + |C_\d|^{-1} T_{\d\u} \Psi )y} \, \ud y,
\]
then an obvious approach to compute $\Psi$ is to proceed by successive
substitution: define iteratively
\begin{equation}
   \label{e:functional}
\Psi_{n+1} = \int_0^\infty   e^{C_\u^{-1} T_{\u\u} y} \, C_\u^{-1} T_{\u\d} \,
e^{(|C_\d|^{-1} T_{\d\d}  + |C_\d|^{-1} T_{\d\u} \Psi_n )y} \, \ud y.
\end{equation}
for $n \geq 0$, starting from $\Psi_0 = 0$, The resulting sequence is
monotonically convergent to $\Psi$ as we show in
Theorem~\ref{t:functional}.  The proof is new, and we give it in
detail.  It is based on the evolution of a stack $\sigma$
associated to the fluid queue.  

At the epochs when the phase process enters $\Su$ after a sojourn in
$\Sd$, we put the level at the top of the stack; the value recorded on
top of the stack is removed when the fluid decreases to that level.
 Formally, we
define the sequence $\{s_k: 0 \leq k \leq L\}$ of epochs where the
stack increases during the interval $[0, \Theta]$:
\begin{align*}
s_0  &  = 0, 
\\
s_k & = 
\inf\{t > f_k : \varphi(t) \in \Su\} 
\qquad \mbox{with $f_k = \inf\{t > s_{k-1}: \varphi(t) \in \Sd\}$,}
\end{align*}
and $L = \sup\{k: s_k < \Theta\}$.  On the sample path of Figure
\ref{f:stack}, $L=6$ and we have marked $s_0$ to $s_6$.

\begin{figure}[t]
\centering{
\includegraphics{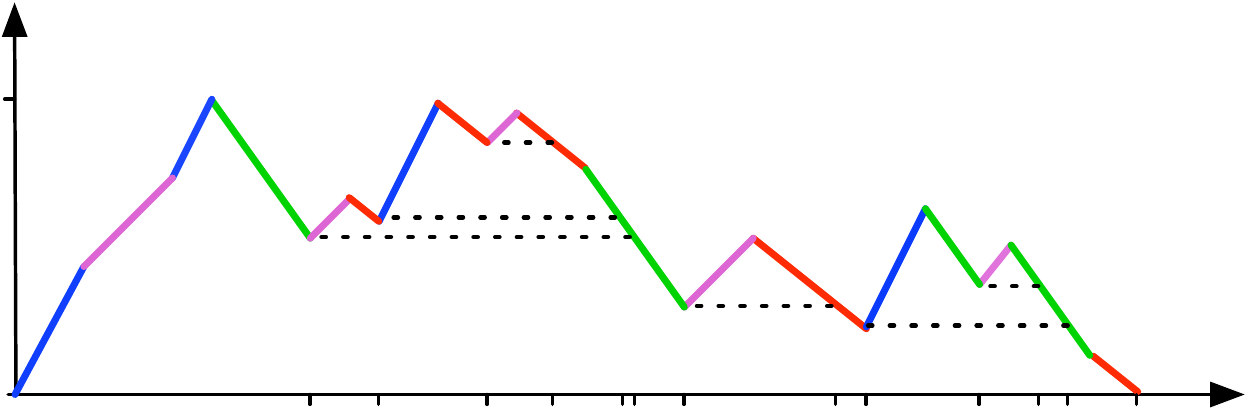}
\put(-10,-10){\makebox(0,0)[c]{$t$}}
\put(-30,-7){\makebox(0,0)[c]{$s'_0$}}
\put(-355,-8){\makebox(0,0)[c]{$s_0$}}
\put(-268,-8){\makebox(0,0)[c]{$s_1$}}
\put(-250,-8){\makebox(0,0)[c]{$s_2$}}
\put(-220,-8){\makebox(0,0)[c]{$s_3$}}
\put(-165,-8){\makebox(0,0)[c]{$s_4$}}
\put(-110,-8){\makebox(0,0)[c]{$s_5$}}
\put(-78,-8){\makebox(0,0)[c]{$s_6$}}
\put(-375,110){\makebox(0,0)[c]{$X(t)$}}
\put(-367,90){\makebox(0,0)[c]{$y$}}
\put(-367,5){\makebox(0,0)[c]{$0$}}
}
\caption{The fluid level $X(s_i)$ is pushed on the stack at time $s_i$,
  $i = 0, 1 \ldots$.  The epochs $s'_1$ to $s'_6$ of removal of the top are marked but
  not labeled, they are ordered as $s'_3 < s'_2<s'_1 < s'_4 < s'_6 <
  s'_5 < s'_0$.   The maximum size of the stack is 4, reached at time $s_3$.}
\hrulefill
\label{f:stack}
\end{figure}

Next, we define $\{s'_k : 0 \leq k \leq L\}$:
\[
s'_k = \inf\{t > s_k : X(t) = X(s_k)\}.
\]
On Figure~\ref{f:stack}, the epochs $s'_1$ to $s'_6$ are marked, but
  not labeled so as not to clutter the graph.

  At time $s_k$, the size $|\sigma|$ increases by one and we record
  $X(s_k)$ on top of $\sigma$, at time $s'_k$ we remove the top of
  $\sigma$ and $|\sigma|$ decreases by one.  Note that
  $s'_0 = \Theta$, and that the stack becomes empty for the first
  time.

\begin{thm}
   \label{t:functional}
The sequences $\Psi_n$, $n \geq 0$ defined by
\begin{equation}
   \label{e:stack}
(\Psi_n)_{ij} = \P[\Theta < \infty, \varphi(\Theta)=j, \max_{0 \leq t
  \leq \Theta} |\sigma(t)|\leq n | \varphi(0)=i],
\end{equation}
for $i$ in $\Su$, $j$ in $\Sd$, satisfies (\ref{e:functional}).
Furthermore, $\Psi_n$ is the unique solution of the linear equation
\begin{equation}
   \label{e:psin}
 C_\u^{-1} T_{\u\d}  +     C_\u^{-1} T_{\u\u} \Psi_{n}  + \Psi_{n} U_{n-1} = 0,
\end{equation}
where
\begin{equation}
   \label{e:un}
 U_{n-1}  = |C_\d|^{-1} T_{\d\d}  + |C_\d|^{-1} T_{\d\u} \Psi_{n-1}
\end{equation}
for $n \geq 1$.

The sequence $\{\Psi_n\}$ converges monotonically to $\Psi$ and
$\{U_n\}$ converges to $U$, as $n \rightarrow \infty$.
\end{thm}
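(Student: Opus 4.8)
The plan is to establish the three assertions of the theorem in the natural order: first the probabilistic formula (\ref{e:stack}) satisfies the integral recursion (\ref{e:functional}), then the equivalent linear (Sylvester) characterisation (\ref{e:psin})--(\ref{e:un}), and finally monotone convergence to $\Psi$. The starting observation is that $\max_{0\le t\le\Theta}|\sigma(t)|$ counts the maximal nesting depth of excursions above the running minimum: the stack reaches depth $k$ precisely when, during the first passage from the current level down to the initial level, the phase process makes a descending excursion into $\Su$ that itself requires a sub-passage of depth $k-1$. This is exactly the recursive structure that the Riccati-type derivation of $\Psi$ in Theorem~\ref{t:psiNu} exploits, but now truncated at depth $n$. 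So I would first argue, mimicking the heuristic leading to (\ref{e:psia}), that conditioning on the level $y$ at which the phase first leaves $\Su$ and on the event that the ensuing descent uses a stack of depth at most $n-1$ gives
\[
(\Psi_n)_{ij} = \int_0^\infty \big(e^{C_\u^{-1}T_{\u\u}y}\big)_{i\cdot}\, \big(C_\u^{-1}T_{\u\d}\big)\, \big(e^{U_{n-1}y}\big)_{\cdot j}\,\ud y,
\]
where $U_{n-1}$ is the generator of the depth-$(n-1)$-restricted downward phase process $\gamma^\d$; unfolding $U_{n-1}$ one more step by the same argument that produced (\ref{e:u}) yields (\ref{e:un}), and substituting gives (\ref{e:functional}) with $\Psi_0=0$ (depth $0$ meaning the stack is never pushed, i.e.\ the phase never enters $\Su$ after $\Sd$, which on $[0,\Theta]$ starting in $\Su$ forces $\Theta$ not to occur, hence probability $0$). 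Pre-multiplying the displayed integral by $C_\u^{-1}T_{\u\u}$ and integrating by parts — exactly as in the passage from (\ref{e:psia}) to (\ref{e:psib}) — turns the integral identity into the linear equation (\ref{e:psin}); uniqueness of its solution for given $\Psi_{n-1}$ follows because (\ref{e:psin}) is a nonsingular Sylvester equation in $\Psi_n$ (the spectra of $C_\u^{-1}T_{\u\u}$ and of $U_{n-1}$, the latter being a subgenerator, are separated by the imaginary axis in the appropriate sense), which is the same nonsingularity already invoked for (\ref{e:psib}).

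For convergence I would run two parallel monotonicity arguments. Probabilistically, $\{\max_{0\le t\le\Theta}|\sigma(t)|\le n\}$ is an increasing sequence of events whose union is the whole sample space on $\{\Theta<\infty\}$ (the stack depth is a.s.\ finite on any finite excursion), so by monotone convergence $(\Psi_n)_{ij}\uparrow \P[\Theta<\infty,\varphi(\Theta)=j\mid\varphi(0)=i]=\Psi_{ij}$, and correspondingly $U_n = |C_\d|^{-1}T_{\d\d}+|C_\d|^{-1}T_{\d\u}\Psi_n \uparrow |C_\d|^{-1}T_{\d\d}+|C_\d|^{-1}T_{\d\u}\Psi = U$ by (\ref{e:u}). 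Purely analytically, one checks by induction on $n$ that the map appearing on the right of (\ref{e:functional}) is monotone nondecreasing in $\Psi_n$ (the exponential $e^{(\,\cdot\,)y}$ of a matrix with nonnegative off-diagonal entries is entrywise nondecreasing in those entries, and all prefactors are fixed nonnegative matrices), so $\Psi_0=0\le\Psi_1$ propagates to $\Psi_n\le\Psi_{n+1}\le\Psi$ for all $n$, the last inequality because $\Psi$ itself is a fixed point; a bounded monotone sequence of matrices converges, and its limit solves (\ref{e:psia}) with $U$ given by (\ref{e:u}), hence equals the minimal nonnegative solution $\Psi$ by Theorem~\ref{t:psiNu}.

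The main obstacle I anticipate is the rigorous identification of $(\Psi_n)_{ij}$ with the truncated integral, i.e.\ making the informal "condition on the depth of the nested descent" argument precise — one has to check that the depth-$\le n$ restriction factorises correctly across the level $y$ at which $\Su$ is first left (the depth used strictly above $y$ and the depth used on the descent below $y$ add, not just bound, the total), and that no probability mass is lost at $y=\infty$. The cleanest route is probably a recursion on the first push time $s_1$: on $[0,s_1)$ the level rises in $\Su$ contributing the factor $e^{C_\u^{-1}T_{\u\u}y}$, at $s_1$ the phase jumps to $\Sd$ contributing $C_\u^{-1}T_{\u\d}\,\ud y$, and the remainder of $[0,\Theta]$ decomposes into a descent governed by $e^{U_{n-1}y}$ because every push occurring in that remainder sits one level deeper in the stack than $s_1$ and hence is allowed only to depth $n-1$. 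Everything else is a routine rerun of computations already carried out earlier in the paper.
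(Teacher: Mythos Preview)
Your proposal is correct and follows essentially the same route as the paper: condition on the level $y$ at which the phase first leaves $\Su$, recognise that the constrained descent from $y$ to $0$ is governed by $e^{U_{n-1}y}$ because each sub-excursion into $\Su$ carries one fewer unit of allowable stack depth, integrate by parts to obtain the Sylvester equation (\ref{e:psin}), and argue monotone convergence probabilistically (your supplementary analytic monotonicity argument is a harmless extra). One small slip in your last paragraph: the epoch you describe --- where ``the phase jumps to $\Sd$'' after the initial rise --- is $f_1$ in the paper's notation, not the push time $s_1$ (which is the first \emph{re-entry} into $\Su$); your decomposition is right, only the label is off.
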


\begin{proof}
It is obvious that the sequence defined by (\ref{e:stack}) is monotone
and converges to $\Psi$ as there are fewer constraints on the
trajectories for increasing $n$ until there is none in the limit.

Next, we show that the return probabilities defined in (\ref{e:stack})
are solutions of (\ref{e:functional}).  For $n=1$, we have 
\[
\Psi_1 = \int_0^\infty   e^{C_\u^{-1} T_{\u\u} y} \, C_\u^{-1}
T_{\u\d} \,
e^{|C_\d|^{-1} T_{\d\d}y} \, \ud y.
\]
This means  that the fluid queue spends some time in $\Su$ and
grows up to some level $y$, then switches to $\Sd$ and never returns
to $\Su$ until hitting level 0.   We push the value 0 on $\sigma$ at
time 0, remove it at time $\Theta$ and so $|\sigma| = 1$ over the
whole interval.  This shows that $\Psi_1$ is a solution of
(\ref{e:functional}) with $\Psi_0=0$.

For the general case, we illustrate on Figure \ref{f:psin} the
physical meaning of the right-hand side of (\ref{e:functional}): the
fluid process grows up to some level $y$, then goes down to 0 with
occasional episodes of growth; during those episodes, the trajectories
followed by the process are constrained by the definition of $\Psi_n$.

\begin{figure}[t]
\centering{
\includegraphics{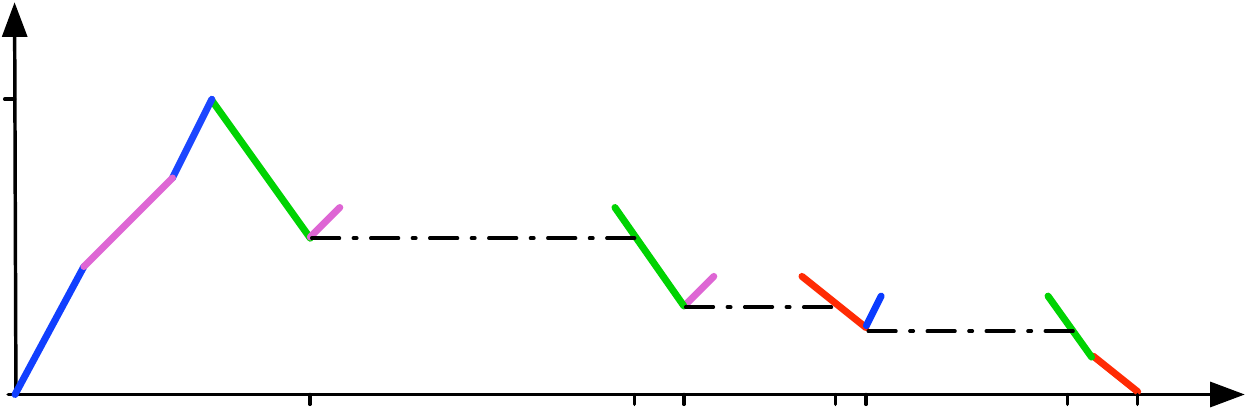} 
\put(-10,-10){\makebox(0,0)[c]{$t$}}
\put(-30,-7){\makebox(0,0)[c]{$s'_0$}}
\put(-355,-8){\makebox(0,0)[c]{$s_0$}}
\put(-268,-8){\makebox(0,0)[c]{$a_1$}}
\put(-175,-7){\makebox(0,0)[c]{$b_1$}}
\put(-222,65){\makebox(0,0)[c]{$\Psi_n$}}
\put(-160,-8){\makebox(0,0)[c]{$a_2$}}
\put(-120,-7){\makebox(0,0)[c]{$b_2$}}
\put(-140,45){\makebox(0,0)[c]{$\Psi_n$}}
\put(-105,-8){\makebox(0,0)[c]{$a_3$}}
\put(-50,-7){\makebox(0,0)[c]{$b_3$}}
\put(-80,37){\makebox(0,0)[c]{$\Psi_n$}}
\put(-375,110){\makebox(0,0)[c]{$X(t)$}}
\put(-367,90){\makebox(0,0)[c]{$y$}}
\put(-367,5){\makebox(0,0)[c]{$0$}}
}
\caption{Approximation $\Psi_{n+1}$ by functional iteration: the size
  of the stack increases at most by $n$ over
  the intervals $[a_1, b_1]$, $[a_2, b_2]$ and $[a_3, b_3]$ and is at
  most $n+1$ over the whole interval $[s_0, s'_0]$.}
\hrulefill
\label{f:psin}
\end{figure}

Such episodes, if any, occur during intervals $(a_i,b_i)$, $1 \leq i
\leq L'$, with
\begin{align*}
a_1 & = s_1,    \\
b_i &= \inf\{ t > a_i : X(t)= X(a_i)\}  \\
a_{i+1}  & = \inf\{t > b_i: \varphi(t) \in \Su\},
\end{align*}
$L' = \sup\{i : a_i < \Theta\}$.  During the intervals $(s_0,a_1) \cup 
(\cup_{1 \leq i \leq L'-1} (b_i,a_{i+1}) \cup (b_{L'}, s'_0)$,
$\sigma$ contains only the value 0 which was pushed at time 0, and
$|\sigma|=1$.  During the intervals $(a_i, b_i)$, the stack may
increase at most by $n$ units.  Thus, $|\sigma| \leq n+1$ over the
whole interval $[0, \Theta]$ and the right-hand side of
(\ref{e:functional}) is equal to $\Psi_{n+1}$. 

With $U_n$ defined in (\ref{e:un}), (\ref{e:functional}) becomes 
\begin{equation}
\Psi_{n+1} = \int_0^\infty   e^{C_\u^{-1} T_{\u\u} y} \, C_\u^{-1} T_{\u\d} \,
e^{U_ny} \, \ud y.
\end{equation}
and (\ref{e:psin}) follows in the same manner as we proved
(\ref{e:psib}).  The coefficients $C_\u^{-1} T_{\u\u}$ and $U_n$ are
both defective generators, all of their eigenvalues are in $\CC_{<0}$,
and so the system (\ref{e:psin}) has a unique solution $\Psi_n$.  Therefore,
the sequence $\{U_n, \Psi_n\}$ is well-defined and the theorem
follows.
\end{proof}

The algorithm defined by (\ref{e:psin}, \ref{e:un}) is easily
implemented and is the most efficient among several {\em linearly} convergent
algorithms, as shown in Bean {\it et al.}~\cite{brt05b}.   Several
other procedures have been proposed in Guo~\cite{guo01b} and Bini {\it et
  al.}~\cite{bilm04, bim12}.  
A special mention should be made of the Newton method
as it is  easily implemented and much faster than functional
iteration: the sequence $\{\Psi^{(N)}_n\}$ defined by
\begin{align*}
  C_\u^{-1} T_{\u\d}  &+     (C_\u^{-1} T_{\u\u} + \Psi^{(N)}_n
  |C_\d|^{-1} T_{\d\u}) \Psi^{(N)}_{n+1}  
\\ & + \Psi^{(N)}_{n+1}
  (|C_\d|^{-1} T_{\d\d}  + |C_\d|^{-1} T_{\d\u}  \Psi^{(N)}_n )  = 
\Psi^{(N)}_n  |C_\d|^{-1} T_{\d\u}  \Psi^{(N)}_n 
\end{align*}
for $n \geq 0$, with $\Psi^{(N)}_0  = 0$,
is well-defined and converges  {\em quadratically} if
$\mu \not= 0$.

The most efficient algorithms today form the
family of doubling algorithms, which solve simultaneously for $\Psi$
and for $\whpsi$; they include the structure-preserving doubling
algorithm (SDA, Guo {\it et al.}~\cite{glx06}), SDA shrink-and-shift
(Bini {\it et al.}~\cite{bmp10}), componentwise-accurate doubling
algorithms (Nguyen and Poloni~\cite{np15}), and
alternating-directional doubling algorithm (Wang {\it et
  al.}~\cite{wwl12}). These algorithms are quadratically
convergent if $\mu \neq 0$ and, furthermore, each iteration is 
faster due to fewer computations.

\section{Extensions}
\label{s:extensions}

After the publication of Ramaswami's seminal paper \cite{ram99}, 
the basic fluid queue model defined through Equations (\ref{e:x}) and (\ref{e:regulated}) has been
extended in many ways.  We cite some of these, without getting into
details,  the  list is far from exhaustive.

\paragraph{Finite buffers}
In some applications, $Y(t)$ represents the content of some finite
buffer, so that the level may only take values in some interval $0
\leq Y(t) \leq B < \infty$ (see da Silva Soares and Latouche~\cite{dssl05,dssl03}
and Bean {\it et al.}~\cite{bot09}).  
In that case, it is natural to
choose as regeneration points the epochs of return to the upper
boundary in addition to the returns to level 0, and the transition
matrix $H$ between regeneration points makes use of
first passage probabilities from boundary to boundary.  These may be obtained from Lemma~\ref{t:system} with
$a=0$ and $b=B$,  if the process starts from the lower boundary, or
$a=B$ and $b=0$, if the process starts from the upper boundary.

The final expression for the stationary density is a mixture of two matrix
exponentials, $e^{Kx}$ and $e^{\whK(B-x)}$.  Details may be found in
\cite{dssl07} and in \cite{ln15b}.

\paragraph{Level feedback}

In our presentation so far, the level is driven by the phase, subject
to the boundary constrains that $Y(t) \geq 0$, or $0 \leq Y(t) \leq
B$, while the phase evolves independently of the level.  In many
cases, the level has a direct influence on the evolution of the phase;
for instance, one might reduce the flow into the buffer as the level gets
nears the upper boundary, so as to avoid spillage.  

In Bean {\it et al.}~\cite{bo08} and da Silva Soares~\cite{dssl07}
(and other references cited there), one defines a number of threshold
values $0 \leq b_1 < b_2 < \cdots < b_N \leq \infty$ such that the
parameters $C$ and $Q$ of the phase process change upon the crossing of a
threshold.  Here, regenerations occur when the fluid reaches any of
the threshold, and the analysis of such systems may be based on a
systematic extension of the results for the system with a finite buffer.

\paragraph{Fluid with jumps}
In Remiche~\cite{remi04} $Y(t)$
represents the supply of tokens in a leaky bucket system.  It increases
linearly in time and drops by a positive amount each time a file is
transmitted.  
In Bean {\it et al.}~\cite{bos10}, $Y(t)$ represents the amount of
wear of a power generator and it may jump from $B$ (indicating that
the generator is unusable) to 0 (indicating that it has been replaced
by a new equipment).  
As discussed in Badescu {\it et
  al.}~\cite{bbdls04} and Stanford {\it et al.}~\cite{sabbdl05},
risk processes may be analysed as fluid
queues with jumps.

If the jumps have a
phase-type distribution, then the analysis of the process requires
very little adaptation from the material presented in the present paper.

\paragraph{Markov modulated Brownian motion}  

The definition of these processes is very similar to that of fluid
flows.  The difference is that the fluid evolves like a Brownian
motion with parameters (drift and variance) which depend on
$\varphi(t)$.  Recent references are 
d'Auria {\it et al.} \cite{aikm10,aikm12}, Ivanovs~\cite{ivano10},
Gribaudo {\it et al.}~\cite{gmst08} where the authors focused on
obtaining time-dependent distributions and first hitting times using
different  approaches: stochastic ODE resolution, spectral decomposition
and martingale theory.   Breuer~\cite{breue12} determined the occupation time of the process in an interval before a one- or two-sided exit.
Latouche and Nguyen~\cite{ln15b,ln15} are two recent papers that
follow a regenerative approach similar to the one developed here. 

\paragraph{Two-dimensional fluid}
 
A few authors have considered systems where the component $X$ is
two-dimensional: Bean and O'Reilly~\cite{bo13,bo14}, Foss and
Miyazawa~\cite{fm14} and Latouche {\it et al.}~\cite{lnp11b}
among others.  The area of two- or higher-dimensional fluids is still
wide open, with many exciting unanswered questions.  

\subsubsection*{Acknowledgements}
The authors would like to acknowledge the support of ACEMS (Australian Research Council Centre of Excellence for Mathematical and Statistical Frontiers).

\bibliographystyle{abbrv}
\bibliography{gnral}

\begin{thebibliography}{10}

\bibitem{ar03}
S.~Ahn and V.~Ramaswami.
\newblock Fluid flow models and queues: A connection by stochastic coupling.
\newblock {\em Stochastic Models}, 19:325--348, 2003.

\bibitem{ams82}
D.~Anick, D.~Mitra, and M.~M. Sondhi.
\newblock Stochastic theory of a data-handling system with multiple sources.
\newblock {\em The Bell System Technical Journal}, 61:1871--1894, 1982.

\bibitem{asmus95b}
S.~Asmussen.
\newblock Stationary distributions for fluid flow models with or without
  {Brownian} noise.
\newblock {\em Comm. Statist. Stochastic Models}, 11:21--49, 1995.

\bibitem{asmus03}
S.~Asmussen.
\newblock {\em Applied Probability and Queues}.
\newblock Springer, New York, second edition, 2003.

\bibitem{au03b}
F.~Avram and M.~Us\'abel.
\newblock Ruin probabilities and deficit for the renewal risk model with
  phase-type interarrival times.
\newblock {\em Astin Bulletin}, 34:315--332, 2004.

\bibitem{bdlrs03}
A.~Badescu, L.~Breuer, A.~da~Silva~Soares, G.~Latouche, M.-A. Remiche, and
  D.~Stanford.
\newblock Risk processes analyzed as fluid queues.
\newblock {\em Scandinavian Actuarial Journal}, 105(2):127--141, 2005.

\bibitem{bbdls04}
A.~Badescu, L.~Breuer, S.~Drekic, G.~Latouche, and D.~Stanford.
\newblock The surplus prior to ruin and the deficit at ruin for a correlated
  risk process.
\newblock {\em Scandinavian Actuarial Journal}, pages 433--445, 2005.

\bibitem{brt05b}
N.~Bean, M.~O'Reilly, and P.~Taylor.
\newblock Algorithms for return probabilities for stochastic fluid flows.
\newblock {\em Stochastic Models}, 21:149--184, 2005.

\bibitem{bo08}
N.~G. Bean and M.~M. O'Reilly.
\newblock Performance measures of a multi-layer {Markovian} fluid model.
\newblock {\em Ann. Oper. Res.}, 160:99--120, 2008.

\bibitem{bo13}
N.~G. Bean and M.~M. O'Reilly.
\newblock A stochastic two-dimensional fluid model.
\newblock {\em Stochastic Models}, 29:31--63, 2013.

\bibitem{bo14}
N.~G. Bean and M.~M. O'Reilly.
\newblock The stochastic fluid-fluid model: A stochastic fluid model driven by
  an uncountable-state process, which is a stochastic fluid model itself.
\newblock {\em Stoch. Proc. Appl.}, 124:1741--1772, 2014.

\bibitem{bos10}
N.~G. Bean, M.~M. O'Reilly, and J.~E. Sargison.
\newblock A stochastic flow model of the operation and maintenance of power
  generation systems.
\newblock {\em IEEE Transactions on Power Systems}, 25:1361--1374, 2010.

\bibitem{bot09}
N.~G. Bean, M.~M. O'Reilly, and P.~Taylor.
\newblock Hitting probabilities and hitting times for stochastic fluid flows.
  the bounded model.
\newblock {\em Probability in the Engineering and Information Sciences},
  23:121--147, 2009.
\newblock doi:10.1017/S0269964809000102.

\bibitem{brt05}
N.~G. Bean, M.~M. O'Reilly, and P.~G. Taylor.
\newblock Hitting probabilities and hitting times for stochastic fluid flows.
\newblock {\em Stoch. Proc. Appl.}, 115:1530--1556, 2005.

\bibitem{bmp10}
D.~Bini, B.~Meini, and F.~Poloni.
\newblock Transforming algebraic {Riccati} equations into unilateral quadratic
  matrix equations.
\newblock {\em Numer. Math.}, 2010.

\bibitem{bilm04}
D.~A. Bini, B.~Iannazzo, G.~Latouche, and B.~Meini.
\newblock On the solution of {Riccati} equations arising in fluid queues.
\newblock {\em Linear Algebra Appl.}, 413:474--494, 2006.

\bibitem{bim12}
D.~A. Bini, B.~Iannazzo, and B.~Meini.
\newblock {\em Numerical Solution of Algebraic Riccati Equations}.
\newblock Number~9 in Fundamentals of Algorithms. SIAM, Philadelphia PA, 2012.

\bibitem{breue12}
L.~Breuer.
\newblock Occupation times for {Markov-modulated} {Brownian} motion.
\newblock {\em J. Appl. Probab.}, 49:549--565, 2012.

\bibitem{cm91}
S.~L. Campbell and C.~D. Meyer.
\newblock {\em Generalized Inverses of Linear Transformations}.
\newblock Dover Publications, New York, 1991.
\newblock Republication.

\bibitem{cinla75}
E.~\c{C}inlar.
\newblock {\em Introduction to Stochastic Processes}.
\newblock Prentice--Hall, Englewood Cliffs, NJ, 1975.

\bibitem{cv02}
P.~Coolen-Schrijner and E.~A. van Doorn.
\newblock The deviation matrix of a continuous-time {Markov} chain.
\newblock {\em Probab. Engrg. Informational Sci.}, 16:351--366, 2002.

\bibitem{dssl01}
A.~da~Silva~Soares and G.~Latouche.
\newblock Further results on the similarity between fluid queues and {QBDs}.
\newblock In G.~Latouche and P.~Taylor, editors, {\em Proceedings of the 4th
  International Conference on Matrix-Analytic Methods}, pages 89--106. World
  Scientific, River Edge, NJ, 2002.

\bibitem{dssl05}
A.~da~Silva~Soares and G.~Latouche.
\newblock A matrix-analytic approach to fluid queues with feedback control.
\newblock {\em I. J. of Simulation}, 6:4--12, 2005.

\bibitem{dssl03}
A.~da~Silva~Soares and G.~Latouche.
\newblock Matrix-analytic methods for fluid queues with finite buffers.
\newblock {\em Performance Evaluation}, 63:295--314, 2006.

\bibitem{dssl07}
A.~da~Silva~Soares and G.~Latouche.
\newblock Fluid queues with level dependent evolution.
\newblock {\em European J. Oper. Res.}, 196:1041--1048, 2009.

\bibitem{aikm10}
B.~D'Auria, J.~Ivanovs, O.~Kella, and M.~Mandjes.
\newblock First passage of a {Markov} additive process and generalized {Jordan}
  chains.
\newblock {\em J. Appl. Probab.}, 47:1048--1057, 2010.
\newblock doi:10.1239/jap/1294170518.

\bibitem{aikm10b}
B.~D'Auria, J.~Ivanovs, O.~Kella, and M.~Mandjes.
\newblock First passage process of a {M}arkov additive process with
  applications to reflection problems.
\newblock {\em ArXiv e-prints}, arXiv:1006.2965v1 [math.PR], 2010.

\bibitem{aikm12}
B.~D'Auria, J.~Ivanovs, O.~Kella, and M.~Mandjes.
\newblock Two-sided reflection of {M}arkov-modulated {B}rownian motion.
\newblock {\em Stochastic Models}, 28:316--332, 2012.
\newblock doi:10.1080/15326349.2012.672285.

\bibitem{fm14}
S.~Foss and M.~Miyazawa.
\newblock Two-node fluid network with a heavy-tailed random input: the strong
  stability case.
\newblock {\em J. Appl. Probab.}, 51A:249--265, 2014.

\bibitem{glr11b}
M.~Govorun, G.~Latouche, and M.-A. Remiche.
\newblock Stability for fluid queues: characteristic inequalities.
\newblock {\em Stochastic Models}, 29:64--88, 2013.
\newblock \\ doi: 10.1080/15326349.2013.750533.

\bibitem{gmst08}
M.~Gribaudo, D.~Manini, B.~Sericola, and M.~Telek.
\newblock Second order fluid models with general boundary behaviour.
\newblock {\em Annals of Operations Research}, 160:69--82, 2008.

\bibitem{guo01b}
C.-H. Guo.
\newblock Nonsymmetric algebraic {Riccati} equations and {Wiener-Hopf}
  factorization for {{\em M}}-matrices.
\newblock {\em SIAM J. Matrix Anal. Appl.}, 23:225--242, 2001.

\bibitem{guo02b}
C.-H. Guo.
\newblock A note on the minimal nonnegative solution of a nonsymmetric
  algebraic {Riccati} equation.
\newblock {\em Linear Algebra Appl.}, 357:299--302, 2002.

\bibitem{guo06}
C.-H. Guo.
\newblock Efficient methods for solving a nonsymmetric algebraic {Riccati}
  equation arising in stochastic fluid models.
\newblock {\em J. of Comput. Appl. Math.}, 192:353--373, 2006.

\bibitem{glx06}
X.-X. Guo, W.-W. Lin, and S.-F. Xu.
\newblock A structure-preserving doubling algorithm for nonsymmetric algebraic
  {Riccati} equation.
\newblock {\em Numer. Math.}, 103:393--412, 2006.

\bibitem{ivano10}
J.~Ivanovs.
\newblock Markov-modulated {Brownian} motion with two reflecting barriers.
\newblock {\em J. Appl. Probab.}, 47:1034--1047, 2010.

\bibitem{lt85}
P.~Lancaster and M.~Tismenetsky.
\newblock {\em The Theory of Matrices: With Applications}.
\newblock Academic Press, New York, 1985.

\bibitem{ln15b}
G.~Latouche and G.~T. Nguyen.
\newblock Feedback control: {Markov}-modulated {Brownian} motion with
  instantaneous change of phase.
\newblock {\em Performance Evaluation}, 106:30--49, 2016.
\newblock \newline doi:http://dx.doi.org/10.1016/j.peva.2016.09.004; arXiv:
  1603.01945.

\bibitem{ln15}
G.~Latouche and G.~T. Nguyen.
\newblock Slowing time: {Markov}-modulated {Brownian} motion with a sticky
  boundary.
\newblock {\em Stochastic Models}, 33:297--321, 2017.
\newblock \newline doi:10.1080/15326349.2017.1284000, arXiv: 1508.00922.

\bibitem{lnp11b}
G.~Latouche, G.~T. Nguyen, and Z.~Palmowski.
\newblock Two-dimensional fluid queues with temporary assistance.
\newblock In G.~Latouche, V.~Ramaswami, J.~Sethuraman, K.~Sigman, M.~S.
  Squillante, and D.~D. Yao, editors, {\em Matrix-Analytic Methods in
  Stochastic Models}, volume~27 of {\em Springer Proceedings in Mathematics \&
  Statistics}, chapter~9, pages 187--207. Springer, New York, NY, 2013.
\newblock \newline doi : 10.1007/978-1-4614-4909-6\_9; arXiv: 1404.3996.

\bibitem{lr99}
G.~Latouche and V.~Ramaswami.
\newblock {\em Introduction to Matrix Analytic Methods in Stochastic Modeling}.
\newblock ASA-SIAM Series on Statistics and Applied Probability. SIAM,
  Philadelphia PA, 1999.
\newblock Second printing 2011.

\bibitem{loyne62}
R.~M. Loynes.
\newblock A continuous-time treatment of certain queues and infinite dams.
\newblock {\em J. Austral. Math. Soc.}, 2:484--498, 1962.

\bibitem{np15}
G.~T. Nguyen and F.~Poloni.
\newblock Componentwise accurate fluid queue computations using doubling
  algorithms.
\newblock {\em Numerische Mathematik}, 130:763--792, 2015.
\newblock arXiv:1406.7301.

\bibitem{ram99}
V.~Ramaswami.
\newblock Matrix analytic methods for stochastic fluid flows.
\newblock In D.~Smith and P.~Hey, editors, {\em Teletraffic Engineering in a
  Competitive World (Proceedings of the 16th International Teletraffic
  Congress)}, pages 1019--1030. Elsevier Science B.V., Edinburgh, UK, 1999.

\bibitem{remi04}
M.-A. Remiche.
\newblock Compliance of the token bucket model for {Markovian} type traffic.
\newblock {\em Stochastic Models}, 21:615--630, 2005.

\bibitem{roger94}
L.~C.~G. Rogers.
\newblock Fluid models in queueing theory and {Wiener-Hopf} factorization of
  {Markov} chains.
\newblock {\em Ann. Appl. Probab.}, 4:390--413, 1994.

\bibitem{sabbdl05}
D.~Stanford, F.~Avram, A.~Badescu, L.~Breuer, A.~da~Silva~Soares, and
  G.~Latouche.
\newblock Phase-type approximations to finite-time ruin probabilities in the
  {Sparre Andersen} and stationary renewal risk models.
\newblock {\em ASTIN Bulletin}, 35:131--144, 2005.

\bibitem{wwl12}
W.-G. Wang, W.-C. Wang, and R.-C. Li.
\newblock Alternating-directional doubling algorithm for {M-matrix} algebraic
  {Riccati} equations.
\newblock {\em SIAM J. Matrix Anal. Appl.}, 2012.

\end{thebibliography}

\end{document}